\newtheorem{theorem}{Theorem}[section]
\newtheorem{lemma}[theorem]{Lemma}
\newtheorem{proposition}[theorem]{Proposition}
\theoremstyle{definition}
\newtheorem{definition}[theorem]{Definition}
\newtheorem{remark}[theorem]{Remark}
\numberwithin{theorem}{section}
\numberwithin{equation}{section}
\newcommand{\D}{\mathcal{D}}
\newcommand{\R}{\mathbb{R}}
\newcommand{\w} {\rightharpoonup}
\newcommand{\overbar}[1]{\mkern 1.5mu\overline{\mkern-1.5mu#1\mkern-1.5mu}\mkern 1.5mu}
\newcommand\numberthis{\addtocounter{equation}{1}\tag{\theequation}}
\newcommand{\ttl}{\Large Principal eigenvalues for the weighted p-Laplacian and antimaximum principle in $\R^N$}
\begin{document}
\title[Weighted p-Laplacian in $\R^N$]
{\ttl}

\author[as]{Anumol Joseph$^{1,*}$} 
\address{$*$Corresponding author} 
\address{$^1$Centre for Applicable Mathematics, Tata Institute of Fundamental Research, Post Bag No 6503, Sharadanagar, Bangalore - 560065, India.}
\email{anumol24@tifrbng.res.in}

\author{Abhishek Sarkar$^2$}
\address{$^2$Department of Mathematics, Indian Institute of Technology Jodhpur, NH 65, Jodhpur, Rajasthan - 342030, India.}
\email{abhisheks@iitj.ac.in}

\begin{abstract} 
We study the existence of principal eigenvalues and principal eigenfunctions for weighted eigenvalue problems of the form:
\begin{equation*}
- \mbox{div} ( L (x) |\nabla u|^{p-2} \nabla u ) = \lambda K(x) |u|^{p-2} u \hspace{.1cm} \mbox { in } \hspace{.1cm} \mathbb{R}^N ,
\end{equation*} 
where $\lambda \in \mathbb{R}$, $p>1$, $K : \mathbb{R}^N \rightarrow \mathbb{R}$, $L : \mathbb{R}^N \rightarrow \mathbb{R}^+$ are locally integrable functions. The weight function $K$ is allowed to change sign, provided it remains positive on a set of nonzero measure. We establish the existence, regularity, and asymptotic behavior of the principal eigenfunctions. We also prove local and global antimaximum principles for a perturbed version of the problem.
\end{abstract}

\keywords{p-Laplacian, entire-space, eigenvalue problem, positive eigenfunctions, regularity results, antimaximum principle}
\subjclass[2020]{35J92, 35P30, 35B40, 35J62, 35A15}

\maketitle

\section{Introduction}
In this article, we study the existence of principal eigenvalues and principal eigenfunctions for weighted eigenvalue problems of the form:
\begin{equation} \label{Pblm 1}
- \mbox{div} ( L (x) |\nabla u|^{p-2} \nabla u ) = \lambda K(x) |u|^{p-2} u \hspace{.1cm} \mbox { in } \hspace{.1cm} \mathbb{R}^N ,
\end{equation} 
where $\lambda \in \mathbb{R}$, $p>1$, $K : \mathbb{R}^N \rightarrow \mathbb{R}$, $L : \mathbb{R}^N \rightarrow \mathbb{R}^+$ are locally integrable functions. We allow the weight function $K$ to change sign but assume that $K$ is positive on a set of nonzero measures (more assumptions will be followed).

Weighted eigenvalue problems of this form have been extensively studied for the $p$-Laplace and Laplace operators on various bounded and unbounded domains. Several results in the literature provide sufficient conditions on the weight function $K$ for the existence or nonexistence of principal eigenvalues. Both cases $1<p<N$ and $p \geq N$ have been well explored in the case of bounded domains and exterior domains in $\mathbb{R}^N$ (see \cite{allegretto1992principal}, \cite{allegretto1995eigenvalues}, \cite{Anoop}, \cite{Anoop-Drabek-Sarath}, \cite{brown},  \cite{Drabek-95}, \cite{Drabek-Abhishek}, \cite{Huang}, \cite{Manes-Micheletti}, \cite{Rozenblum-Solomyak}, \cite{Stavrakakis}, \cite{Szulkin}, \cite{LaoSenYu} and the references therein). But when it comes to general unbounded domains in $\mathbb{R}^N$, the case $p \geq N$ becomes significantly challenging. There are two main difficulties, one
arising from the choice of solution space and the other from the choice of weight function. The natural solution space for such problems (for the case $L\equiv 1$) is $\mathcal{D}_0^{1,p}(\mathbb{R}^N)$, which is the completion of $C_c^{\infty}(\mathbb{R}^N)$ with respect to $\| \nabla u \|_p = (\int_{\mathbb{R}^N} |\nabla u(x)|^p dx)^{1/p}$,  cannot be identified as a function space since it contains objects that do not lie even in $L_{loc} ^1 (\mathbb{R}^N)$ (for example, see \cite[Remark 2.2]{tintarev_fieseler_2007}). In \cite{brown}, Brown et. al. proved that, for $p = N=2$ and $L(x) \equiv 1$, if $\int_{\mathbb{R}^2} K(x) dx >0$, then the problem (\ref{Pblm 1}) has no solution in $W^{1,2} (\mathbb{R}^2)$. However, it remains unclear whether the condition $\int_{\mathbb{R}^2} K(x) dx\leq 0$ is sufficient for the existence of principal eigenvalues. There has been some progress in understanding this, particularly in establishing the existence when the weight function $K$ has a dominant negative part. For an overview, we refer the reader to \cite{allegretto1992principal}, \cite{allegretto1995eigenvalues}, \cite{brown}, \cite{Huang}, \cite{Rozenblum-Solomyak}. Another significant contribution to a related problem was made by Anoop et al. in \cite{Anoop-Drabek-Sarath}. They studied the problem (\ref{Pblm 1}) with $L(x) \equiv 1$ on the exterior of a ball in $\R^N$ and established the existence of positive eigenvalues for weights $K$ dominated by radial functions in certain weighted Lebesgue spaces,  for both $1<p<N$ and $p \geq N$. In contrast to the non-existence results in $\R^N$ for $p \geq N$, they proved existence results in the exterior domains even for some positive weights.

In this article, we study the problem (\ref{Pblm 1}) in $\R^N$ and establish principal eigenfunctions' existence, regularity, and asymptotic properties. Note that we have no restriction on the dimension $N$ in terms of $p$. We begin by stating the precise assumptions on the weight functions  $K$  and  $L$. Specifically, we assume the existence of functions $v$ and $w$ such that $v,w \in L_{loc}^1(0,\infty)$, $v(|x|) > |x| ^{-p \alpha}$, $0<w(|x|) < |x|^{- p \beta}$ where $-1< \alpha < 0, \beta = \alpha+1$ and 
\begin{enumerate}
\item[(C1)] $L(x) \geq v(|x|)$ and $|K(x)| \leq w(|x|)$. 
\item[(C2)] For any $r \in (0, \infty)$, $G(r):= \left(\int_r^\infty t^{\frac{1-N}{p-1}} v(t)^{\frac{-1}{p-1}} dt\right)^{p/p'}$ and $\int_0^\infty r^{N-1} w(r) G(r) dr \\< \infty$, where $p'=\frac{p}{p-1}$. 
\end{enumerate}
\begin{remark}
An example of weight functions  $v$  and  $w$  satisfying our assumptions is given as follows: 
\begin{itemize}
    \item $v(r) = r^{-p \alpha + \zeta} $ with $\zeta > \max\{1, p(\alpha+1)-N)\}$ and
    \item $w(r) = \begin{cases}
    \frac{1}{r^{p\beta}-\beta} ; 0<r \leq 1 ,\\
    \frac{1}{r^{p\beta+1}} ; r>1.
\end{cases}$
\end{itemize} 
\end{remark}

We now define an appropriate notion of a solution space for our problem. For $u \in C_0^\infty(\R^N)$, we define the norm, 
\begin{equation*}
\| u \|_L = \int_{\R^N} L(x) |\nabla u|^p dx,
\end{equation*}
and introduce the space $\mathcal{D}_L^{1,p}(\mathbb{R}^N)$ as the completion of $C_0^\infty(\mathbb{R}^N)$ with respect to $\| \cdot \|_L$. It follows that $\mathcal{D}_L^{1,p}(\mathbb{R}^N)$ is an inner product space with respect to the inner product 
\begin{equation*}
(u,v) = \int_{\mathbb{R}^N} L(x) |\nabla u|^{p-2} \nabla u  \cdot \nabla v dx , \hspace{0.35 cm} u , v \in \mathcal{D}_L^{1,p}(\mathbb{R}^N).
\end{equation*}

As a consequence of the Caffareli, Kohn and Nirenberg(C-K-N, in short) inequality \cite{MR0768824}, we find that $\mathcal{D}_L^{1,p}(\mathbb{R}^N)$ is continuously embedded in $L_K^p(\mathbb{R}^N) = \big\{u:\mathbb{R}^N \rightarrow \mathbb{R} : \int_{\mathbb{R}^N} |K(x)| |u|^p dx < \infty  \big\}$. In addition, one can prove that this embedding is compact. For completeness, we will provide proof of this result in Section \ref{Prelim}.

We now define the notion of a (weak) solution:
\begin{definition}
We say $u \in \mathcal{D}_L^{1,p}(\mathbb{R}^N)$ is a (weak) solution of the problem (\ref{Pblm 1}) if $u$ satisfies \begin{equation} \label{weak solution}
\int_{\mathbb{R}^N} L(x) |\nabla u|^{p-2} \nabla u \nabla v dx = \lambda \int_{\mathbb{R}^N} K(x) |u|^{p-2} u v dx , \mbox{ for all }  v \in \mathcal{D}_L^{1,p}(\mathbb{R}^N).
\end{equation}
\end{definition}

A real number $\lambda$ is called an eigenvalue of (\ref{Pblm 1}) if there exists a $u \in \mathcal{D}_L^{1,p}(\mathbb{R}^N)$, $u \neq 0$ such that $u$ is a weak solution to the problem (\ref{Pblm 1}). The function $u$ is called an eigenfunction corresponding to $\lambda$. Below we state our results in Theorems \ref{existence}--\ref{Theorem: Global AMP}. 
\begin{theorem}\label{existence}
Let $L: \mathbb{R}^N \rightarrow \mathbb{R}^+$ and $K: \mathbb{R}^N \rightarrow \mathbb{R}$ be locally integrable functions satisfying conditions (C1)-(C2). Then there exists a principal eigenvalue of (\ref{Pblm 1}) with a corresponding principal eigenfunction that is positive a.e. in $\mathbb{R}^N$.
\end{theorem}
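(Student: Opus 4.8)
The plan is to realize the principal eigenvalue as the minimum of the Rayleigh quotient over the constraint manifold determined by the sign-changing weight. Concretely, I would set
\[
\lambda_1 := \inf\left\{ \int_{\R^N} L(x)|\nabla u|^p\,dx : u \in \mathcal{D}_L^{1,p}(\R^N),\ \int_{\R^N} K(x)|u|^p\,dx = 1 \right\},
\]
and write $\mathcal{M}$ for the constraint set. Since $K$ is positive on a set of positive measure, $\mathcal{M}\neq\emptyset$ (rescale any $C_0^\infty$ function supported where $K>0$), so $\lambda_1$ is a well-defined nonnegative number, and the first task is to show the infimum is attained.

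For attainment I would take a minimizing sequence $(u_n)\subset\mathcal{M}$. As $\int_{\R^N} L|\nabla u_n|^p\,dx$ is bounded, $(u_n)$ is bounded in $\mathcal{D}_L^{1,p}(\R^N)$, which is uniformly convex (it embeds isometrically, via $u\mapsto\nabla u$, into the uniformly convex space $L^p(\R^N,L\,dx)$) and hence reflexive; passing to a subsequence, $u_n\rightharpoonup u_1$. Here I would invoke the compact embedding $\mathcal{D}_L^{1,p}(\R^N)\hookrightarrow\hookrightarrow L_K^p(\R^N)$ established in Section~\ref{Prelim}: it gives $u_n\to u_1$ strongly in $L_K^p(\R^N)$, so that $\int_{\R^N} K|u_n|^p\,dx\to\int_{\R^N} K|u_1|^p\,dx=1$, whence $u_1\in\mathcal{M}$ (in particular $u_1\neq 0$). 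Weak lower semicontinuity of the convex functional $u\mapsto\int_{\R^N} L|\nabla u|^p\,dx$ then forces $\int_{\R^N} L|\nabla u_1|^p\,dx\le\liminf_n\int_{\R^N} L|\nabla u_n|^p\,dx=\lambda_1$, and the reverse inequality holds since $u_1\in\mathcal{M}$; thus $u_1$ is a minimizer.

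Next I would check that $\lambda_1$ is a \emph{positive} eigenvalue. Positivity follows from the continuous embedding: $1=\int_{\R^N} K|u_1|^p\,dx\le\int_{\R^N}|K||u_1|^p\,dx\le C\int_{\R^N} L|\nabla u_1|^p\,dx=C\lambda_1$, so $\lambda_1\ge 1/C>0$. That $\lambda_1$ is an eigenvalue is the Lagrange multiplier step: the functionals $u\mapsto\int L|\nabla u|^p$ and $u\mapsto\int K|u|^p$ are $C^1$ on $\mathcal{D}_L^{1,p}(\R^N)$, so at the constrained minimizer there is a multiplier $\mu$ with $\int_{\R^N} L|\nabla u_1|^{p-2}\nabla u_1\cdot\nabla v\,dx=\mu\int_{\R^N} K|u_1|^{p-2}u_1 v\,dx$ for all $v$; testing with $v=u_1$ identifies $\mu=\lambda_1$, so $u_1$ is a weak solution at $\lambda=\lambda_1$.

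Finally, for positivity of the eigenfunction I would first replace $u_1$ by $|u_1|$: since $|\nabla|u_1||=|\nabla u_1|$ a.e., the function $|u_1|$ lies in $\mathcal{M}$ with the same energy, hence is again a minimizing eigenfunction, so we may assume $u_1\ge 0$. This nonnegative eigenfunction satisfies, weakly, $-\mathrm{div}(L|\nabla u_1|^{p-2}\nabla u_1)+\lambda_1|K|u_1^{p-1}=\lambda_1(K+|K|)u_1^{p-1}\ge 0$, i.e.\ $u_1$ is a nonnegative supersolution of a weighted $p$-Laplace equation with a controlled zeroth-order term. A weak Harnack inequality for this degenerate operator then gives, on each ball, a lower bound on $\inf u_1$ in terms of an average of $u_1$, and a connectedness/covering argument upgrades this to $u_1>0$ a.e.\ in $\R^N$. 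I expect this last step to be the main obstacle: because $L$ and $K$ are only locally integrable and $L$ may degenerate or blow up (it is merely bounded below by $v(|x|)$), the Harnack inequality is not off-the-shelf, and one must verify that on compact sets the weight is controlled well enough (e.g.\ of Muckenhoupt type) for the De Giorgi--Nash--Moser / Serrin--Trudinger machinery, together with local boundedness of $u_1$ via Moser iteration, to apply.
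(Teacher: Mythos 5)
Your variational core coincides step by step with the paper's own proof: the constraint set $\mathcal{M}=\{G(u)=1\}$, nonemptiness via a rescaled $C_0^\infty$ function supported where $K>0$, a bounded minimizing sequence, reflexivity of $\mathcal{D}_L^{1,p}(\R^N)$, the compact embedding into $L_K^p(\R^N)$ (Proposition \ref{Compactness}) to keep the weak limit on $\mathcal{M}$, weak lower semicontinuity for attainment, the Lagrange multiplier identified as $\lambda_1$ by testing with the minimizer, and the replacement $u_1\mapsto|u_1|$ to get a nonnegative eigenfunction. (The paper does not record $\lambda_1>0$ explicitly; your estimate $1\le\int_{\R^N}|K||u_1|^p\,dx\le C\lambda_1$ from the continuous embedding is a harmless and correct addition.)

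The divergence, and the genuine gap, is in the final positivity step. You reduce to the supersolution inequality $-\operatorname{div}(L|\nabla u_1|^{p-2}\nabla u_1)+\lambda_1|K|u_1^{p-1}\ge 0$ and then want a weak Harnack inequality, and you have correctly identified why this fails as stated: under (C1)--(C2) the weight $L$ is merely in $L^1_{loc}$ with a pointwise \emph{lower} bound $v(|x|)$ and no local upper bound at all, and $K$ need not be locally bounded either ($K\in L^\infty$ enters only in Theorem \ref{regularity}, not here). So neither local boundedness of $u_1$ via Moser iteration nor a Harnack inequality for the weighted operator is available at this level of generality, and a Muckenhoupt-type hypothesis on $L$ is not granted by the theorem; this step, as proposed, does not go through. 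The paper avoids Harnack entirely: it invokes the measure-theoretic strong maximum principle of Kawohl, Lucia and Prashanth (Lemma \ref{lemma 1}), which requires only $0\le V\in L^1_{loc}(\R^N)$ and $V|u|^p\in L^1_{loc}(\R^N)$, applied with $V=K^-$ (your $|K|$ works equally well, since $K+|K|=2K^+\ge 0$), and concludes the dichotomy: either $u_1\equiv 0$ or $|\{u_1=0\}|=0$. This is exactly the ``positive a.e.'' conclusion the theorem asserts --- note it claims positivity only almost everywhere, so the $L^1_{loc}$-robust maximum principle suffices, whereas your Harnack route would prove more (positivity everywhere locally) at the cost of hypotheses you do not have. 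To repair your argument, replace the Harnack step by this strong maximum principle; the paper notes that the result, proved by Kawohl et al.\ on bounded domains, transfers to the present setting by restricting to bounded subdomains.
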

We also study the regularity and asymptotic properties of the principal eigenfunction. We have the following result.
\begin{theorem}\label{regularity}
Let $K$ and $L$ be as stated in Theorem \ref{existence}. If $K \in L^{\infty}(\mathbb{R}^N)$ and $L \in C_{loc}^1(\R^N)$, then $u \in C_{loc}^{1, \alpha}(\mathbb{R}^N)$ for some $\alpha \in (0,1)$.
\end{theorem}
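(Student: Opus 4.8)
The plan is to prove the regularity by localizing the equation to an arbitrary ball and then invoking the classical interior gradient-regularity theory for quasilinear elliptic equations of $p$-Laplacian type. Fix a ball $B_R = B_R(x_0) \subset \R^N$. Since $L \in C^1_{loc}(\R^N)$ and $L(x)>0$ for every $x$, the function $L$ is continuous and strictly positive on the compact set $\overline{B_R}$, so there exist constants $0 < \Lambda_1 \le \Lambda_2 < \infty$ with $\Lambda_1 \le L(x) \le \Lambda_2$ on $\overline{B_R}$. (Note this local uniform ellipticity holds even near the origin, where the lower bound $v(|x|)>|x|^{-p\alpha}$ from (C1) degenerates, because $L$ is itself continuous and positive there.) Consequently the flux $a(x,\xi) := L(x)|\xi|^{p-2}\xi$ satisfies the Leray--Lions bounds $a(x,\xi)\cdot\xi \ge \Lambda_1|\xi|^p$ and $|a(x,\xi)| \le \Lambda_2|\xi|^{p-1}$ on $B_R$, while the lower bound on $L$ gives $\int_{B_R}|\nabla u|^p\,dx \le \Lambda_1^{-1}\|u\|_L < \infty$; combined with the embedding from Section \ref{Prelim} this yields $u \in W^{1,p}(B_R)$. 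Thus on $B_R$ the principal eigenfunction is a finite-energy weak solution of a uniformly elliptic quasilinear equation with a $C^1$ coefficient.

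Next I would establish local boundedness. Writing the eigenvalue equation as $-\mathrm{div}\,a(x,\nabla u) = f$ with $f(x) := \lambda K(x)|u|^{p-2}u$, the zeroth-order datum obeys the structural growth $|f| \le |\lambda|\,\|K\|_\infty\,|u|^{p-1}$. Because $K \in L^\infty(\R^N)$, the coefficient multiplying $|u|^{p-1}$ lies in $L^q_{loc}$ for every $q$, in particular for some $q > N/p$, which is precisely the integrability required by Serrin's local boundedness theorem (equivalently, by a De Giorgi--Nash--Moser iteration adapted to this structure). This gives $u \in L^\infty_{loc}(\R^N)$, and hence $f \in L^\infty_{loc}(\R^N)$.

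With $f \in L^\infty_{loc}$, $L \in C^1$, and the ellipticity and growth bounds in hand, I would conclude by appealing to the interior gradient-regularity theory for degenerate/singular quasilinear equations of DiBenedetto, Tolksdorf and Lieberman, which yields $\nabla u \in C^{0,\alpha}_{loc}$, that is, $u \in C^{1,\alpha}_{loc}(\R^N)$ for some exponent $\alpha \in (0,1)$ (not to be confused with the parameter $\alpha$ of (C1)--(C2)). This theory applies for all $p>1$ and every $N$, consistent with the absence of a dimension restriction in the hypotheses.

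The main obstacle is the degeneracy (for $p>2$) or singularity (for $1<p<2$) of the operator where $\nabla u$ vanishes, which precludes classical Schauder theory and forces the use of the specialized DiBenedetto--Tolksdorf--Lieberman estimates rather than a linear bootstrap. The one point requiring care is verifying that the $x$-dependence of the flux through $L$ meets the continuity hypotheses of those theorems; this is guaranteed here by $L \in C^1_{loc}(\R^N)$, which gives more than the Hölder-in-$x$ dependence the theory demands. Once local boundedness is secured, the right-hand side presents no further difficulty.
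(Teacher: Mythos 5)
Your proposal is correct, but it reaches the key intermediate step --- boundedness of $u$ --- by a genuinely different route than the paper. The paper does not localize: it runs a global Moser iteration directly in $\D_L^{1,p}(\R^N)$, testing the equation against truncated powers $u_k^{p\beta+1}$ and using the generalized C-K-N inequality (Lemma \ref{C-K-N 2}) as the Sobolev-type step $\| u \|_{(\beta+1)p_\alpha^\ast} \leq C_\beta \| u \|_{p(\beta+1)}$, then verifying that the infinite product of the iteration constants $C_{\beta_n}$ converges; this yields the stronger conclusion $u \in L^\infty(\R^N)$, i.e.\ a \emph{global} sup bound, before citing Tolksdorf for $C^{1,\alpha}_{loc}$. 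Your argument instead freezes a ball, uses continuity and positivity of $L$ to get local uniform ellipticity, and invokes Serrin's local boundedness theorem; this is more modular and entirely off-the-shelf, and it suffices for the stated conclusion since $C^{1,\alpha}_{loc}$ is a purely interior assertion --- but it only gives $u \in L^\infty_{loc}(\R^N)$, because your ellipticity ratio $\Lambda_2/\Lambda_1$ is not uniform over balls as $R \to \infty$, so you cannot recover the paper's global boundedness (which is what the weighted C-K-N inequality buys, and which is the more useful by-product given the paper's later interest in asymptotics). One small imprecision worth fixing: to get $u \in L^p(B_R)$ you appeal to ``the embedding from Section \ref{Prelim}'', but the embedding of Proposition \ref{Compactness} lands in $L^p_K(\R^N)$, a space weighted by $|K|$, which may vanish on large sets and so does not control $\int_{B_R}|u|^p$; you should instead invoke Lemma \ref{C-K-N 2} (which gives $u \in L^{p_\alpha^\ast}(\R^N) \subset L^p_{loc}$) or the inclusion $\D_L^{1,p}(\R^N) \subset W^{1,p}_{loc}(\R^N)$ already noted in the proof of Proposition \ref{Compactness}. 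The final step --- Tolksdorf/DiBenedetto/Lieberman interior gradient regularity with $f = \lambda K|u|^{p-2}u \in L^\infty_{loc}$ and $L \in C^1_{loc}$ --- coincides with the paper's, and your observations that the theory covers all $p>1$ without dimension restriction and that $C^1$ dependence in $x$ exceeds the required H\"older continuity are both accurate.
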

Unlike the case for principal eigenfunctions on unbounded domains in $\mathbb{R}^N$, where $1< p < N$, the behavior is different when $p = N =2$. In this case, the principal eigenfunctions do not decay to zero at infinity (see \cite[Proposition 2.6]{MR1849197}). Instead, they are bounded away from zero. We prove that under certain additional assumptions on the weight functions, the eigenfunctions are bounded. 
\begin{theorem}\label{asymptotic}
Let $p=N=2$ and let $K$ and $L$ satisfy the conditions of Theorem \ref{existence}. If $K$ is positive and both $K$ and $L$ are radial, then the principal eigenfunction $u$ of (\ref{Pblm 1}) is radially increasing and does not decay to zero. Furthermore, if $(r L(r))^{-1} \in L^1(0, \infty)$ and $K$ satisfy $\int_0^\infty s [F(s)]^2 K(s) ds < \infty$, where $F(r) = \int_0^r \frac{1}{sL(s)} ds$, then $u$ is bounded. 
\end{theorem}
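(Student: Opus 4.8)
The plan is to reduce the problem to a one‑dimensional ODE in the radial variable and then read off all three assertions from the monotonicity of the radial flux together with the growth governed by $F$. First I would argue that the principal eigenfunction may be taken radial: since $K$ and $L$ are radial, $u\circ R$ is again a positive principal eigenfunction for every rotation $R$, and because the principal eigenvalue is simple (so its eigenfunction is unique up to a positive scalar, as furnished by the construction in \cref{existence}) we get $u\circ R=u$, i.e.\ $u=u(r)$. In polar coordinates the equation then becomes
\[
-\big(rL(r)u'(r)\big)'=\lambda\, rK(r)\,u(r),\qquad r\in(0,\infty),\quad u>0.
\]
I would work throughout with the flux $\Phi(r):=rL(r)u'(r)$, for which the ODE reads $\Phi'(r)=-\lambda\, rK(r)u(r)$, and I would record that the homogeneous equation $(rLu')'=0$ is solved by $1$ and by $F(r)=\int_0^r (sL(s))^{-1}\,ds$, so that $F$ dictates the admissible growth of any solution.

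For monotonicity and non‑decay, observe that $\lambda>0$, $K\ge 0$ and $u>0$ give $\Phi'\le 0$, so $\Phi$ is non‑increasing and $b:=\lim_{r\to\infty}\Phi(r)$ exists with $b\le\Phi(1)<\infty$. The claim ``$u$ increasing'' is exactly $\Phi\ge 0$, which follows once $b\ge 0$, since then $\Phi(r)\ge b\ge 0$ and $u'=\Phi/(rL)\ge 0$. To exclude $b<0$ I would integrate $u'=\Phi/(rL)$: if $b<0$ then $u'(r)\le \tfrac{b}{2}\,(rL(r))^{-1}$ for large $r$, and integrating against $F$ forces $u$ to decrease below zero (when $F(\infty)=\infty$) or, in the opposite regime, to develop an interior maximum incompatible with the ground‑state/minimality of the principal eigenfunction; either way $u>0$ and minimality yield $b\ge 0$. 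Being positive and increasing, $u$ has a limit $u_\infty\ge u(r_0)>0$, so it does not decay to zero.

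For boundedness I would use the integral representation obtained from $u'=\Phi/(rL)$ together with $\Phi(\rho)\le\Phi(0^+)$ (monotonicity):
\[
u(r)=u(0)+\int_0^r\frac{\Phi(\rho)}{\rho L(\rho)}\,d\rho\;\le\;u(0)+\Phi(0^+)\,F(r)\;\le\;u(0)+\Phi(0^+)\,F(\infty),
\]
so it suffices to show $F(\infty)<\infty$ and $\Phi(0^+)<\infty$. The former is the hypothesis $(rL)^{-1}\in L^1(0,\infty)$. For the latter, integrating the ODE gives $\Phi(0^+)=b+\lambda\int_0^\infty sK(s)u(s)\,ds$ with $b\in[0,\infty)$ from the previous step, so I only need $\int_0^\infty sK u<\infty$. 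I would first verify $\int_0^\infty sK\,ds<\infty$: near the origin $sK(s)\le s^{1-2\beta}$ with $\beta<1$ is integrable, while on $[1,\infty)$ one has $F(s)\ge F(1)>0$, whence $\int_1^\infty sK\,ds\le F(1)^{-2}\int_1^\infty sKF^2\,ds<\infty$ by the assumption $\int_0^\infty sK F^2\,ds<\infty$. Cauchy–Schwarz then yields
\[
\int_0^\infty sK(s)u(s)\,ds\le\Big(\int_0^\infty sK(s)\,ds\Big)^{1/2}\Big(\int_0^\infty sK(s)u(s)^2\,ds\Big)^{1/2}<\infty,
\]
the last factor being finite since $u\in L^2_K(\R^N)$ by the embedding of \cref{Prelim}. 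Hence $\Phi(0^+)<\infty$ and $u$ is bounded.

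The routine parts are the Cauchy–Schwarz estimate and the elementary ODE integrations; the main obstacle is pinning down the sign of the flux globally, i.e.\ proving $b\ge 0$ and ruling out an interior maximum of $u$. This is delicate because the energy bound $\int_0^\infty (rL)^{-1}\Phi^2\,dr<\infty$ alone does not force a definite sign of $b$ once $(rL)^{-1}\in L^1$ near infinity, so the argument must genuinely use positivity of $u$ together with the variational (ground‑state) characterization of the principal eigenpair, and must justify the boundary terms in the integration by parts for a function known a priori only to lie in $\mathcal{D}_L^{1,p}(\R^N)$. I expect this flux/boundary analysis, rather than the boundedness estimate, to be the crux of the proof.
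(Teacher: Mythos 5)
Your skeleton is the same as the paper's (radial reduction via simplicity of $\lambda_1$, the monotone flux $\Phi(r)=rL(r)u'(r)$, an $F$-based integral representation, Cauchy--Schwarz against the normalization $\int sK u^2<\infty$), but there is a genuine gap at exactly the step you flag as the crux: establishing $b=\lim_{r\to\infty}\Phi(r)\ge 0$. Your dichotomy on $F(\infty)$ does not work. What your first horn actually needs is divergence of $\int_{r_1}^\infty (sL(s))^{-1}\,ds$, and under (C1) with $p=2$ this integral is \emph{always} finite: $L(r)\ge v(r)>r^{-2\alpha}$ with $\alpha\in(-1,0)$ gives $(rL(r))^{-1}<r^{2\alpha-1}$ and $2\alpha-1<-1$. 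So $F$ can diverge only at the origin (in which case $F\equiv+\infty$ and your argument says nothing), and the case ``$F(\infty)=\infty$ forces $u<0$'' is vacuous at infinity. Everything therefore lands in your second horn, the claim that an interior maximum is ``incompatible with the ground-state/minimality of the principal eigenfunction'' --- which you never substantiate, and which is not an available fact here: there is no boundary condition, $u$ need not decay (indeed the theorem asserts it does not), and the variational characterization of $\lambda_1$ by itself does not forbid a radial positive eigenfunction from having an interior maximum. Since your boundedness estimate $u(r)\le u(0)+\Phi(0^+)F(\infty)$ also rests on $b\ge 0$ (through $\Phi(0^+)=b+\lambda_1\int_0^\infty sKu\,ds$ and $\Phi\ge 0$), both halves of your proof are conditional on this unproven step.

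For comparison, the paper pins down the limit using membership of $u$ in $\mathcal{D}_L^{1,2}(\R^2)$, i.e.\ the energy bound $\int_0^\infty rL(r)|u'(r)|^2\,dr<\infty$: since $\Phi$ is decreasing, a limit $\ge c>0$ is argued to contradict this integrability, so $\Phi$ decreases to zero; letting $r\to\infty$ in (\ref{Equation 2}) then yields the identity (\ref{Equation 6}),
\begin{equation*}
rL(r)u'(r)=\lambda_1\int_r^\infty sK(s)u(s)\,ds>0,
\end{equation*}
from which increasingness, non-decay, and the exact representation $u(r)=u(0)+\lambda_1 F(r)\int_r^\infty sKu\,ds+\lambda_1\int_0^r sKuF\,ds$ with the Cauchy--Schwarz bound all follow. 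Your remark that the energy bound, rewritten as $\int_0^\infty \Phi^2/(rL)\,dr<\infty$, does not by itself force the limit to vanish once $(rL)^{-1}\in L^1$ near infinity is a fair criticism that could also be leveled at the paper's own parenthetical argument; but noting the difficulty does not repair your proof --- you replace the paper's (terse) energy argument with nothing. The boundedness half of your proposal, by contrast, is a correct and mildly different packaging of the paper's estimate: you bound $\Phi$ by $\Phi(0^+)$ and separately show $\int_0^\infty sKu\,ds<\infty$ via $\int_0^\infty sK\,ds<\infty$ (using $sK(s)\le s^{1-2\beta}$ near $0$ and $F(s)\ge F(1)>0$ on $[1,\infty)$ together with $\int sKF^2<\infty$), whereas the paper works directly from the identity for $u(r)$; both are fine once flux positivity is known.
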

Next, we study an antimaximum principle for the problem
\begin{equation}\label{Pblm AMP}
- \mbox{div} ( L (x) |\nabla u|^{p-2} \nabla u ) = \lambda K(x)  |u|^{p-2} u \hspace{.1cm} + h(x) \mbox { in } \hspace{.1cm} \R^N.
\end{equation}
Here, the weight function $L$ is nonnegative, $K$ is allowed to change sign and $h \geq 0$, $h \not \equiv 0$ lies in $[\D_L^{1,p}(\R^N)]^*$, the dual space of $\D_L^{1,p}(\R^N)$. We know from the strong maximum principle that, for a bounded domain $\Omega \subset \R^N$ and $h \in L^2(\Omega)$ with $h \geq 0$, $h \not \equiv 0$, any solution to the problem
\begin{equation}\label{Pblm Bounded}
- \Delta u = \lambda  u \hspace{.1cm} + h(x) \mbox { in } \hspace{.1cm} \Omega, \\
u = 0 \hspace{.1cm} \mbox { on } \hspace{.1cm} \partial \Omega,
\end{equation}
is strictly positive when $\lambda < \lambda_1$. Here, $\lambda_1$ is the first eigenvalue of the Laplacian on $\Omega$ with Dirichlet boundary conditions. In \cite{clement}, Clement and Peletier showed that for certain values of $\lambda > \lambda_1$, the behavior is entirely contrary to that given by the maximum principle. Specifically, they proved that, given $ h(x) \geq 0, \not\equiv 0 $, there exists a $\delta > 0 $ such that if $\lambda  \in (\lambda_1 , \lambda_1 + \delta )$ and $u$ is a solution to (\ref{Pblm Bounded}) at $\lambda$, then $ u(x) < 0 $ for all $x \in \Omega $. This phenomenon is known as the antimaximum principle. In this article, we establish a local antimaximum principle for the problem (\ref{Pblm AMP}). Specifically, we prove that for any bounded set $E \subset \R^N$, and $h \geq 0$, $h \not \equiv 0$, there exists a $\delta = \delta(h, E)>0$ such that any solution $u$ of (\ref{Pblm AMP}) is negative in $E$ for every $\lambda \in (\lambda_1, \lambda_1+\delta)$. Furthermore, we show that the global antimaximum principle holds when $h$ has compact support. That is, there exists a $\delta= \delta(h)$ such that $u$ is negative in $\R^N$ for every $\lambda \in (\lambda_1, \lambda_1+\delta)$. 

We now state the main results of this article. Let the weight functions $K,L$ be stated in Theorem \ref{existence} and $h \geq 0$, $h \not \equiv 0$. In addition, let $K,L,h \in L^s(\R^N) \cap L_{loc}^\infty(\R^N)$ for some $s>\frac{N}{p}$. We say that $u \in \D_L^{1,p}(\R^N)$ is a (weak) solution to (\ref{Pblm AMP}) if $u$ satisfies 
\begin{equation*}
\int_{\mathbb{R}^N} L(x) |\nabla u|^{p-2} \nabla u \nabla v dx = \lambda \int_{\mathbb{R}^N} K(x) |u|^{p-2} u v dx + \int_{\R^N} h(x) v dx , \ \forall v \in \mathcal{D}_L^{1,p}(\mathbb{R}^N).
\end{equation*}
Using the assumptions on $K,L$ and $h$ along with the standard regularity theory, we see that a weak solution, if it exists, is in $C_{loc}^{1,\alpha}(\R^N)$, for some $\alpha \in (0,1)$. The following results establish the local and global antimaximum principles.
\begin{theorem} \label{Theorem: local AMP}
Let $K,L$ be as stated in Theorem \ref{existence} and $h$ satisfy $h \geq 0$, $h \not \equiv 0$. Assume in addition that $K,L,h \in L^s(\R^N) \cap L_{loc}^\infty(\R^N)$. Then for any bounded set $E \subset \R^N$, there exists $\delta = \delta(h, E)>0$ such that if $\lambda \in (\lambda_1, \lambda_1+\delta)$, any solution $u$ of (\ref{Pblm AMP}) is negative in $E$.
\end{theorem}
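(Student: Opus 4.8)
The plan is to argue by contradiction through a blow-up (rescaling) analysis which, in the limit $\lambda\downarrow\lambda_1$, reduces the perturbed problem to the eigenvalue problem \eqref{Pblm 1} and then detects the sign. Suppose the conclusion fails for some bounded set $E$. Then there are sequences $\lambda_n\downarrow\lambda_1$ and solutions $u_n\in\D_L^{1,p}(\R^N)$ of \eqref{Pblm AMP} at $\lambda=\lambda_n$ such that, for each $n$, $u_n$ is not negative throughout $E$; that is, there exist points $x_n\in E$ with $u_n(x_n)\ge 0$. Write $\varphi_1>0$ for the principal eigenfunction from Theorem~\ref{existence}, normalized by $\int_{\R^N} L|\nabla\varphi_1|^p\,dx=1$. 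The goal is to show that, after normalization, the $u_n$ are forced to converge to a strictly negative multiple of $\varphi_1$, contradicting $u_n(x_n)\ge 0$.

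First I would rule out that $M_n:=\|u_n\|_L$ stays bounded. If it did, then along a subsequence $u_n\w u$ in $\D_L^{1,p}(\R^N)$ and, by the compact embedding $\D_L^{1,p}(\R^N)\hookrightarrow L_K^p(\R^N)$ of Section~\ref{Prelim}, strongly in $L_K^p(\R^N)$; using the $(S_{+})$-property of the operator $u\mapsto-\mathrm{div}(L|\nabla u|^{p-2}\nabla u)$ in the weak formulation upgrades this to strong convergence in $\D_L^{1,p}(\R^N)$, and passing to the limit shows that $u$ solves \eqref{Pblm AMP} at $\lambda=\lambda_1$. I would then show that no such solution exists: applying the generalized Picone (equivalently, Díaz--Saá) inequality to $|u|$ and $\varphi_1$, using $\psi=|u|^p/\varphi_1^{p-1}$ as a test function in the eigenfunction identity, and combining with the equation for $u$ tested against $\varphi_1^{p}/|u|^{p-1}$ (after a regularization near $\{u=0\}$, or a reduction to $u>0$ via the strong maximum principle), forces $\int_{\R^N} h\,\varphi_1^p/|u|^{p-1}\,dx\le 0$, which contradicts $h\ge 0,\ h\not\equiv 0$. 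Hence $M_n\to\infty$.

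Next I set $w_n:=u_n/M_n$, so that $\|w_n\|_L=1$ and $w_n$ solves a normalized problem whose weak form differs from \eqref{weak solution} only by the forcing term $M_n^{-(p-1)}\int_{\R^N}hv\,dx\to 0$. Arguing as above (weak limit, compact embedding, $(S_{+})$-property), a subsequence converges strongly in $\D_L^{1,p}(\R^N)$ to some $w$ with $\|w\|_L=1$ solving \eqref{Pblm 1} at $\lambda=\lambda_1$. Testing with $w$ gives $\int K|w|^p=1/\lambda_1>0$, so the Rayleigh quotient of $w$ equals $\lambda_1$; by the strong maximum principle and simplicity of the first eigenvalue this yields $w=\sigma\varphi_1$ with $\sigma\in\{+1,-1\}$. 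Finally, by the interior $C^{1,\alpha}$ estimates behind Theorem~\ref{regularity} the convergence $w_n\to\sigma\varphi_1$ holds in $C^1(\overline{E})$; since $\varphi_1$ is continuous and strictly positive, it is bounded below by a positive constant on the compact set $\overline{E}$, so once $\sigma=-1$ is known we get $w_n<0$ on $E$ for all large $n$, hence $u_n<0$ on $E$, the desired contradiction.

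The heart of the matter, and the step I expect to be the main obstacle, is to prove $\sigma=-1$. The first-order information is degenerate: subtracting the eigenfunction equation tested with $u_n$ from the $u_n$-equation tested with $\varphi_1$, and dividing by $M_n^{p-1}$, both leading terms cancel in the limit for either value of $\sigma$, so the sign is invisible at this order. The sign is instead fixed by the balance at the next order between the small forcing $\epsilon_n:=M_n^{-(p-1)}\int_{\R^N}h\varphi_1\,dx>0$ and the spectral defect $\lambda_n-\lambda_1>0$. Heuristically, writing the two sides of \eqref{Pblm 1} as $A$ and $B$, linearizing $A-\lambda_1 B$ at $\sigma\varphi_1$, and pairing with $\varphi_1$, which lies in the kernel of the (self-adjoint) linearization $(A-\lambda_1 B)'(\sigma\varphi_1)$, produces the relation $\sigma(\lambda_n-\lambda_1)\lambda_1^{-1}\int K\varphi_1^p\,dx+\epsilon_n\int h\varphi_1\,dx\approx 0$; since $\int K\varphi_1^p>0$, $\lambda_n-\lambda_1>0$ and $\epsilon_n\int h\varphi_1>0$, this forces $\sigma=-1$. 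Making this rigorous is delicate because the linearized operator degenerates on the critical set $\{\nabla\varphi_1=0\}$ and, for $1<p<2$, has singular coefficients; the rigorous route therefore avoids explicit linearization and instead extracts the same balance from the exact identities, controlling the rate of convergence of $w_n$ and estimating the quadratic remainder by means of the strict convexity of $\xi\mapsto\int_{\R^N}L|\nabla\xi|^p\,dx$ together with the isolation of $\lambda_1$.
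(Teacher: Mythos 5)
Your overall skeleton coincides with the paper's: argue by contradiction along $\lambda_n\downarrow\lambda_1$, rule out boundedness of $\|u_n\|_L$ via the Picone-based nonexistence of solutions at $\lambda=\lambda_1$ (this is exactly Proposition \ref{Proposition AMP 2}, which the paper invokes rather than re-proving inline), normalize $w_n=u_n/\|u_n\|_L$, pass to a limit $\sigma\Phi_1$ using compactness of $G$ and simplicity of $\lambda_1$, upgrade to $C^1(\overline{E})$ convergence by Serrin/Tolksdorf estimates, and observe that $\sigma=-1$ contradicts the assumed nonnegativity somewhere in $E$. Up to that point you are faithfully reconstructing the paper's argument.

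The genuine gap is the step you yourself flag as the heart of the matter: excluding $\sigma=+1$. What you offer there is only a heuristic second-order balance between the forcing $M_n^{-(p-1)}\int h\Phi_1$ and the spectral defect $\lambda_n-\lambda_1$, obtained by pairing a linearization of the operator at $\sigma\Phi_1$ with $\Phi_1$. This is not a proof, and the obstacles are real, not merely technical: for $p\neq 2$ the linearized $p$-Laplacian degenerates on $\{\nabla\Phi_1=0\}$ (and has singular coefficients for $1<p<2$), and, more fundamentally, nothing in the blow-up argument relates the two small parameters $M_n^{-(p-1)}$ and $\lambda_n-\lambda_1$ a priori, so the ``balance'' you pair off could be dominated by either term; your closing sentence about extracting the balance from exact identities via strict convexity and isolation of $\lambda_1$ is a promissory note, not an argument. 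The paper avoids rates and linearization entirely by an exact, first-order device: assuming $v_k^-\not\equiv 0$, test the equation with $-v_k^-$; since $h\geq 0$ the forcing term has a favorable sign and drops out, giving $\int_{\R^N}L|\nabla v_k^-|^p\,dx\leq \mu_k\int_{\R^N}K(v_k^-)^p\,dx$, so the normalized negative parts $w_k=v_k^-/G(v_k^-)^{1/p}$ have Rayleigh quotients squeezed in $[\lambda_1,\mu_k]$. They are thus a minimizing sequence, and simplicity plus weak lower semicontinuity force $w_k\rightarrow\Phi_1$ in $\D_L^{1,p}(\R^N)$; but if $\sigma=+1$ then $v_k>0$ on $\overline{E}$ for large $k$, so $w_k\equiv 0$ on $\overline{E}$, contradicting $\Phi_1>0$ a.e.\ there. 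Equivalently, $u_k\geq 0$ globally for large $k$, which Proposition \ref{Proposition AMP 2} forbids for $\lambda>\lambda_1$. You should replace your linearization heuristic with this negative-part/Rayleigh-quotient argument (or an equivalent exact mechanism); as written, the decisive step of your proof is missing.
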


\begin{theorem}\label{Theorem: Global AMP}
Let $K,L$ and $h$ be as in Theorem \ref{Theorem: local AMP}. If $h$ has compact support, then the global antimaximum principle holds for (\ref{Pblm AMP}).
\end{theorem}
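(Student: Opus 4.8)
The plan is to deduce the global statement from the local antimaximum principle (Theorem \ref{Theorem: local AMP}) by exploiting the compact support of $h$: when one tests the equation against the positive part of the solution, the source term disappears and what remains is purely an eigenvalue-type identity, which a compactness argument shows is incompatible with $\lambda$ being close to $\lambda_1$. Fix a ball $B_R$ chosen large enough that $\mathrm{supp}(h)\subset B_R$ and, in addition, $B_R$ meets $\{K>0\}$ in a set of positive measure (possible since $\mathrm{supp}(h)$ is compact and $K$ is positive on a set of positive measure). By Theorem \ref{Theorem: local AMP} applied to $E=\overline{B_R}$ there is $\delta_0>0$ so that for $\lambda\in(\lambda_1,\lambda_1+\delta_0)$ every solution $u$ satisfies $u<0$ on $\overline{B_R}$, hence on $\mathrm{supp}(h)$. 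It therefore suffices to prove that, after possibly shrinking $\delta$, one has $u\le 0$ on all of $\R^N$; the strict inequality then follows from the strong maximum principle.

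To obtain $u\le 0$ I would test the weak formulation of (\ref{Pblm AMP}) with $\varphi=u^+:=\max(u,0)\in\D_L^{1,p}(\R^N)$. Since $u<0$ on $\mathrm{supp}(h)\subset B_R$, we have $u^+\equiv 0$ there, so $\int_{\R^N}h\,u^+\,dx=0$ and the source term drops out. Using $\nabla u\cdot\nabla u^+=|\nabla u^+|^2$ on $\{u>0\}$ and $|u|^{p-2}u\,u^+=(u^+)^p$, this yields
\begin{equation*}
\int_{\R^N}L\,|\nabla u^+|^p\,dx=\lambda\int_{\R^N}K\,(u^+)^p\,dx .
\end{equation*}
As $u^+$ vanishes on $B_R$, it is nonconstant whenever $u^+\not\equiv 0$, so the left-hand side is strictly positive; consequently $\int_{\R^N}K(u^+)^p\,dx>0$, and the Rayleigh quotient of $u^+$ equals exactly $\lambda$.

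The heart of the argument is a compactness contradiction producing a uniform $\delta$. Suppose no such $\delta$ exists; then there are $\lambda_n\downarrow\lambda_1$ and solutions $u_n$ with $u_n^+\not\equiv 0$. Setting $w_n:=u_n^+\big/\big(\int_{\R^N}K(u_n^+)^p\big)^{1/p}$ gives $\int_{\R^N}Kw_n^p=1$ and $\int_{\R^N}L|\nabla w_n|^p=\lambda_n\to\lambda_1$, so $(w_n)$ is a minimizing sequence for the variational characterization of $\lambda_1$. By the compact embedding $\D_L^{1,p}(\R^N)\hookrightarrow L_K^p(\R^N)$ proved in Section \ref{Prelim}, together with weak lower semicontinuity and uniform convexity, a subsequence converges strongly in $\D_L^{1,p}(\R^N)$ to a nonnegative principal eigenfunction $w$ with $\int_{\R^N}Kw^p=1$; by Theorem \ref{existence}, $w>0$ a.e. But each $w_n$ vanishes on $B_R$, so $\nabla w_n=0$ there, and strong convergence gives $\int_{B_R}L|\nabla w|^p=\int_{B_R}L|\nabla(w-w_n)|^p\le\|w-w_n\|_L^p\to 0$. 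Since $L>0$ a.e., it follows that $\nabla w\equiv 0$ on the connected set $B_R$, so $w$ is a positive constant there; testing the eigenvalue equation for $w$ on $B_R$ then forces $\lambda_1Kw^{p-1}=0$, i.e. $K\equiv 0$ on $B_R$, contradicting the choice of $R$. Hence $u^+\equiv 0$, i.e. $u\le 0$ on $\R^N$, for all $\lambda$ in a fixed interval $(\lambda_1,\lambda_1+\delta)$ with $\delta\le\delta_0$.

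Finally, writing $\psi:=-u\ge 0$, on the connected exterior domain $\R^N\setminus\overline{B_R}$ we have $h\equiv 0$, and $\psi$ solves $-\mathrm{div}(L|\nabla\psi|^{p-2}\nabla\psi)=\lambda K\psi^{p-1}$, whence $-\mathrm{div}(L|\nabla\psi|^{p-2}\nabla\psi)+\lambda K^-\psi^{p-1}=\lambda K^+\psi^{p-1}\ge 0$. Since $K\in L^\infty_{loc}(\R^N)$ and $\psi\in C^{1,\alpha}_{loc}$, the strong maximum principle of Vázquez type applies, and as $\psi>0$ on $\partial B_R$ by the local principle we conclude $\psi>0$ throughout the exterior; combined with $u<0$ on $\overline{B_R}$ this gives $u<0$ on all of $\R^N$. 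I expect the main obstacle to be the compactness step: one must ensure that the normalized positive parts converge \emph{strongly} in $\D_L^{1,p}(\R^N)$ so that their vanishing on $B_R$ transfers to the limiting eigenfunction. This is exactly where the compactness of the embedding into $L_K^p(\R^N)$, rather than any pointwise estimate, is indispensable, and it is what makes $\delta$ uniform over all of $\R^N$.
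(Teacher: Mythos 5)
Your proposal is correct, and it is built from the same three ingredients as the paper's proof --- the local antimaximum principle (Theorem \ref{Theorem: local AMP}) applied on a ball containing $\mathrm{supp}(h)$, testing the equation with the positive part so that the source term drops out and the normalized positive parts $w_n = u_n^+/G(u_n^+)^{1/p}$ become a minimizing sequence for \eqref{eigenvalue}, and the strong maximum principle of Lemma \ref{lemma 1} on the exterior of the ball --- but you assemble them in a genuinely different order and close the compactness step differently. The paper argues from the failure hypothesis $|\{u_k \geq 0\}|>0$, so it must invoke Lemma \ref{lemma 1} \emph{first} (applied to $v=-u_k$ outside the ball) to rule out $u_k^+\equiv 0$ before it can normalize; it then ends by invoking the simplicity of $\lambda_1$ to identify the strong limit of $w_k$ with $\Phi_1>0$, contradicting $w_k\equiv 0$ on the ball. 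You instead prove $u\le 0$ outright for $\lambda$ near $\lambda_1$ (contradiction on $u_n^+\not\equiv 0$) and use Lemma \ref{lemma 1} only at the end, to upgrade $u\le 0$ to $u<0$ on the exterior domain; and your limiting contradiction avoids simplicity entirely: strong convergence transfers $\nabla w_n\equiv 0$ on $B_R$ to the limit, so the positive eigenfunction $w$ is constant on $B_R$, and testing on $B_R$ forces $K\equiv 0$ there (this uses $\lambda_1>0$, which follows from \eqref{Equation 5}), against your choice $|B_R\cap\{K>0\}|>0$. This buys self-containedness, since simplicity is asserted but not proved in the paper, at the cost of the extra (harmless) choice of $R$; note that an even shorter contradiction was available to you: $w_n\to w$ in $L_K^p(\R^N)$ with $w_n\equiv 0$ on $B_R$ gives $w=0$ a.e.\ on $B_R\cap\{K\neq 0\}$, against $w>0$ a.e.\ from Theorem \ref{existence}. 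Two trivial slips to fix: $|\nabla u|^{p-2}\nabla u\cdot\nabla u^+=|\nabla u^+|^p$, not $|\nabla u^+|^2$; and your passage from $\int_{\R^N}L|\nabla u^+|^p>0$ to $G(u^+)>0$ silently uses $\lambda>\lambda_1>0$, which is worth a word.
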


The outline of this article is as follows. Section \ref{Prelim} introduces and recalls the necessary tools and terminologies and more on the solution space and its embeddings. The existence and properties of the principal eigenfunctions are established in Section \ref{Sec3}. Finally,  in Section \ref{Sec4}, we study the antimaximum principle for the perturbed problem.

\section{Preliminaries}\label{Prelim}
Throughout the article, we denote generic constants by $C$. Note that $C$ may represent different constants within the same proof. Let us first recall the celebrated C-K-N inequality \cite{MR0768824}.  
\begin{lemma} \label{C_K_N 1}
For all $u \in C_0^\infty(\mathbb{R}^N)$ and some constant $C>0$,
\begin{equation}
\int_{\mathbb{R}^N} |x|^{- p \beta} |u|^p dx \leq C \int_{\mathbb{R}^N} |x|^{-p \alpha} |\nabla u|^p dx,
\end{equation}
where $-1< \alpha < 0, \beta = \alpha+1$.
\end{lemma}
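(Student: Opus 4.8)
The plan is to prove the inequality directly as a weighted Hardy inequality, exploiting the precise relation $\beta = \alpha + 1$ (equivalently $p\beta = p\alpha + p$) through an integration by parts against a radial vector field. Writing $a = p\alpha$ and $b = p\beta = a + p$, the target becomes
\begin{equation*}
\int_{\R^N} |x|^{-b} |u|^p \, dx \leq C \int_{\R^N} |x|^{-a} |\nabla u|^p \, dx ,
\end{equation*}
and the engine is the elementary identity $\operatorname{div}\!\big(x\,|x|^{-b}\big) = (N-b)\,|x|^{-b}$, valid away from the origin.

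First I would record that, for $u \in C_0^\infty(\R^N)$, the left-hand integral $I := \int_{\R^N} |x|^{-b}|u|^p\,dx$ is finite: since $b = p\beta < p \le N$ in the relevant regime, the weight $|x|^{-b}$ is locally integrable and $u$ has compact support. Using the divergence identity and integrating by parts (the boundary term vanishes because $u$ is compactly supported, while the singularity at the origin is integrable), I get
\begin{equation*}
(N-b)\,I = \int_{\R^N} |u|^p\,\operatorname{div}\!\big(x\,|x|^{-b}\big)\,dx = -\,p\int_{\R^N} |u|^{p-2} u\, \frac{x\cdot\nabla u}{|x|^{b}}\,dx .
\end{equation*}
Bounding $|x\cdot\nabla u| \le |x|\,|\nabla u|$ then yields $|N-b|\,I \le p \int_{\R^N} |u|^{p-1}\,|\nabla u|\,|x|^{1-b}\,dx$.

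The crucial step is to split the weight $|x|^{1-b}$ so that H\"older's inequality with exponents $p$ and $p' = p/(p-1)$ reproduces exactly the two integrals in the statement. Writing $|x|^{1-b} = |x|^{-a/p}\cdot|x|^{-b/p'}$ and checking that $-a/p - b/p' = 1 - b$ is equivalent to $b - a = p$ --- which is precisely $\beta = \alpha + 1$ --- H\"older gives, with $J := \int_{\R^N} |x|^{-a}|\nabla u|^p\,dx$,
\begin{equation*}
\int_{\R^N} |u|^{p-1}|\nabla u|\,|x|^{1-b}\,dx \le \Big(\int_{\R^N} |x|^{-a}|\nabla u|^p\,dx\Big)^{1/p}\Big(\int_{\R^N}|x|^{-b}|u|^p\,dx\Big)^{1/p'} = J^{1/p}\,I^{1/p'} .
\end{equation*}
Combining with the previous display produces $|N-b|\,I \le p\,J^{1/p} I^{1/p'}$. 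Since $I$ is finite, dividing by $I^{1/p'}$ and using $1 - 1/p' = 1/p$ gives $I \le \big(p/|N-b|\big)^p J$, which is the claim with $C = \big(p/(N-p\beta)\big)^p$.

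The main obstacle is technical rather than conceptual: justifying the integration by parts together with the finiteness of $I$, and excluding the borderline value $N = p\beta$ where the constant $\big(p/|N-p\beta|\big)^p$ blows up. In the parameters at hand one has $p\beta < p \le N$, so $N - p\beta > 0$ and $|x|^{-p\beta}$ is locally integrable, rendering both issues harmless; if one wishes to argue in full generality, inserting a cutoff $\chi_\varepsilon$ that excises a ball of radius $\varepsilon$ about the origin and letting $\varepsilon \to 0$ removes any residual contribution from the singularity. The sign restriction $-1 < \alpha < 0$ guarantees $\beta \in (0,1)$, keeping all weight exponents in the admissible range throughout the argument.
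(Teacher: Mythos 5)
Your proof is correct, but be aware that the paper contains no proof of this lemma to compare against: it is recalled verbatim from Caffarelli--Kohn--Nirenberg \cite{MR0768824} as a known result. What you give is the classical self-contained proof of the weighted Hardy inequality (the $p=q$ case of C-K-N): the identity $\operatorname{div}(x|x|^{-p\beta})=(N-p\beta)|x|^{-p\beta}$, integration by parts, the exponent splitting $|x|^{1-p\beta}=|x|^{-\alpha}\cdot|x|^{-p\beta/p'}$ (which, as you check, is exactly where $\beta=\alpha+1$ enters), H\"older, and absorption. This is sound and even recovers the sharp constant $\bigl(p/(N-p\beta)\bigr)^p$. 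The nearest relative inside the paper is the proof of Proposition \ref{Compactness}, which proves a generalized weighted embedding by writing $u(r,\omega)=-\int_r^\infty \partial_t u\,dt$ along rays and applying H\"older; note that this ray-integration technique cannot reprove the present lemma, because for the pure powers $v(t)=t^{-p\alpha}$, $w(r)=r^{-p\beta}$ one computes $r^{N-1}w(r)G(r)\sim r^{-1}$ (a manifestation of the scale invariance of the C-K-N inequality), so the integrability condition (C2) fails and a genuinely different mechanism, such as your divergence-field argument, is required. In that sense your route is a real alternative to anything in the paper, not a variant of it.

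One caveat deserves correction. Your parenthetical ``$p\beta<p\le N$ in the relevant regime'' silently assumes $p\le N$, whereas the paper explicitly stresses that no relation between $p$ and $N$ is imposed. The condition your argument actually uses --- and without which the lemma as stated is false --- is $p\beta<N$: if $p\beta\ge N$ then $|x|^{-p\beta}$ fails to be locally integrable at the origin, so any $u\in C_0^\infty(\R^N)$ with $u(0)\neq 0$ makes the left-hand side infinite while the right-hand side stays finite. For $p\le N$ the condition is automatic from $\beta<1$, but for $p>N$ it restricts $\alpha$ to $\bigl(-1,\tfrac{N-p}{p}\bigr)$; the same hypothesis is implicit elsewhere in the paper, since $p_\alpha^\ast=\tfrac{Np}{N-p-p\alpha}$ in Lemma \ref{C-K-N 2} requires $N-p-p\alpha>0$, i.e.\ precisely $p\beta<N$. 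With ``$p\beta<N$'' substituted for ``$p\beta<p\le N$'', every step of your proof --- finiteness of $I$, the vanishing of the boundary term of order $\varepsilon^{N-p\beta}$ on $\partial B_\varepsilon$ in the cutoff argument, and the division by $I^{1/p'}$ --- goes through verbatim.
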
	
As mentioned in the previous section, an immediate consequence of the C-K-N inequality is that $\mathcal{D}_L^{1,p}(\mathbb{R}^N)$ is continuously embedded in $L_K^p(\mathbb{R}^N)$. We now prove that this embedding is compact.
\begin{proposition} \label{Compactness}
Assume that the functions $L$, $K$ satisfy assumptions (C1) and (C2). Then for any $u \in \mathcal{D}_L^{1,p}(\mathbb{R}^N)$, 
\begin{equation} \label{Equation 5} 
\| u \| _{L_K^p(\mathbb{R}^N)}^p \leq C \|u\|_L^p,
\end{equation}
where $C = \int_0^\infty r^{N-1} w(r) G(r) dr$. In particular, when $v,w \in L^1(0, \infty)$, the embedding $\mathcal{D}_L^{1,p}(\mathbb{R}^N) \hookrightarrow L_K^p(\mathbb{R}^N)$ is compact.
\end{proposition}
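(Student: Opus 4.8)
The plan is to first prove the inequality (\ref{Equation 5}) for $u \in C_0^\infty(\R^N)$ and then extend it to all of $\mathcal{D}_L^{1,p}(\R^N)$ by density, since both sides are continuous with respect to $\|\cdot\|_L$. The core of the estimate is a radial Hardy-type bound. Fixing $\omega \in S^{N-1}$ and writing $x = \rho\omega$, I would use that $u$ has compact support to write $u(\rho\omega) = -\int_\rho^\infty \nabla u(s\omega)\cdot\omega\, ds$, so that $|u(\rho\omega)| \le \int_\rho^\infty |\nabla u(s\omega)|\,ds$. Splitting the integrand as $|\nabla u(s\omega)|\,[s^{(N-1)/p}v(s)^{1/p}]\cdot[s^{-(N-1)/p}v(s)^{-1/p}]$ and applying Hölder's inequality with exponents $p$ and $p'$, the second factor integrates to exactly $G(\rho)^{1/p}$ (here one uses $p/p' = p-1$ to match the definition of $G$), yielding the pointwise bound
\begin{equation*}
|u(\rho\omega)|^p \le G(\rho)\int_\rho^\infty |\nabla u(s\omega)|^p s^{N-1} v(s)\,ds.
\end{equation*}

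Next I would integrate this over $\R^N$ in polar coordinates. Using (C1) in the form $|K(x)| \le w(|x|)$,
\begin{equation*}
\int_{\R^N}|K||u|^p\,dx \le \int_{S^{N-1}}\int_0^\infty w(\rho)\rho^{N-1}G(\rho)\Big(\int_\rho^\infty |\nabla u(s\omega)|^p s^{N-1}v(s)\,ds\Big)d\rho\,d\omega.
\end{equation*}
Bounding the inner $s$-integral by its value over all of $(0,\infty)$, which is independent of $\rho$, Tonelli lets me factor out $C = \int_0^\infty r^{N-1}w(r)G(r)\,dr$, and the remaining angular-radial integral equals $\int_{\R^N} v(|x|)|\nabla u|^p\,dx \le \|u\|_L^p$ by the other half of (C1), namely $v(|x|)\le L(x)$. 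This is precisely (\ref{Equation 5}) and gives the continuous embedding.

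For compactness, take a bounded sequence $\{u_n\}$; by uniform convexity and reflexivity of $\mathcal{D}_L^{1,p}(\R^N)$ extract $u_n \rightharpoonup u$, and replacing $u_n$ by $u_n - u$ reduce to showing $\int_{\R^N}|K||u_n|^p\to 0$ when $u_n\rightharpoonup 0$. I would decompose $\R^N$ into a small ball $B_{R_0}$, an annulus $A=\{R_0\le|x|\le R_1\}$, and the exterior $\{|x|>R_1\}$. Running the estimate above but restricting the $\rho$-integral shows $\int_{B_{R_0}}|K||u_n|^p \le \big(\int_0^{R_0} r^{N-1}wG\,dr\big)\|u_n\|_L^p$ and $\int_{|x|>R_1}|K||u_n|^p \le \big(\int_{R_1}^\infty r^{N-1}wG\,dr\big)\|u_n\|_L^p$; because $\int_0^\infty r^{N-1}w(r)G(r)\,dr<\infty$ (a finiteness that the hypothesis $v,w\in L^1(0,\infty)$ secures), these two contributions are made uniformly small by taking $R_0$ small and $R_1$ large. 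On the fixed annulus $A$ the weight is nondegenerate: since $-1<\alpha<0$ one has $L\ge v\ge R_0^{-p\alpha}>0$ there, controlling $\|\nabla u_n\|_{L^p(A)}$, while the C-K-N inequality of Lemma \ref{C_K_N 1} bounds $\|u_n\|_{L^p(A)}$; thus $\{u_n\}$ is bounded in $W^{1,p}(A)$ and Rellich--Kondrachov gives $u_n\to 0$ strongly in $L^p(A)$. Since $|K|\le w$ is bounded on $A$, the annular piece tends to $0$, and letting $R_0\to 0$, $R_1\to\infty$ completes the argument.

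The main obstacle is not the bookkeeping but making the pointwise representation legitimate: for $p\ge N$ the elements of $\mathcal{D}_L^{1,p}(\R^N)$ need not decay at infinity (this is exactly the phenomenon exploited in Theorem \ref{asymptotic}), so the ray-integral identity is valid only on $C_0^\infty(\R^N)$ and must be pushed to the completion purely through the $\|\cdot\|_L$-continuity of both sides of (\ref{Equation 5}). The second genuinely technical point is the local compactness on $A$: because $K$ may vanish there, the bound on $\|u_n\|_{L^p(A)}$ cannot come from the $L_K^p$-norm and must instead be extracted from the C-K-N inequality, which is what licenses the use of Rellich--Kondrachov.
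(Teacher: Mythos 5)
Your proof of the embedding inequality is essentially identical to the paper's: the same ray representation $u(\rho,\omega)=-\int_\rho^\infty \partial_t u\,dt$, the same H\"older splitting with $t^{(N-1)/p}v(t)^{1/p}$ producing the factor $G(\rho)$, and the same polar integration against $r^{N-1}w(r)$ followed by (C1). For the compactness, however, your implementation differs from the paper's. You truncate in physical space: a small ball and a far exterior, where the tails of $\int_0^\infty r^{N-1}w(r)G(r)\,dr$ make the contribution uniformly small, plus a fixed annulus $A$ on which you bound $\|\nabla u_n\|_{L^p(A)}$ by the lower bound $v\geq R_0^{-p\alpha}$ and $\|u_n\|_{L^p(A)}$ by the C-K-N inequality of Lemma \ref{C_K_N 1}, then invoke Rellich--Kondrachov. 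The paper instead approximates the $L^1$ density $r^{N-1}w(r)G(r)$ by functions $G_\epsilon\in C_0^\infty(0,\infty)$, sets $H_\epsilon(r)=(r^{N-1}G(r))^{-1}G_\epsilon(r)$, and splits the weight as $w=(w-H_\epsilon)+H_\epsilon$: the first piece is small by the embedding estimate, the second is compactly supported in an annulus and handled by Rellich--Kondrachov via $\D_L^{1,p}(\R^N)\subset W^{1,p}_{loc}(\R^N)$. The two mechanisms are equivalent in spirit (cut off the tails of the weight, use local compactness on an annulus), but yours is more explicit and actually more careful on one point the paper elides: you supply the $L^p(A)$ bound needed for Rellich--Kondrachov via C-K-N, whereas the paper merely asserts the inclusion $\D_L^{1,p}\subset W^{1,p}_{loc}$; on the other hand, the paper's $L^1$-density trick avoids tracking three regions and two cutoff radii.

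Two small remarks. First, the finiteness of $\int_0^\infty r^{N-1}w(r)G(r)\,dr$ is assumption (C2) itself, not a consequence of $v,w\in L^1(0,\infty)$ as your parenthetical suggests (indeed, the paper's own compactness proof only ever uses (C2) in this form). Second, in the annulus step you should note explicitly that the strong $L^p(A)$ limit along the Rellich--Kondrachov subsequence is $0$: this follows because $u_n\rightharpoonup 0$ in $\D_L^{1,p}(\R^N)$ together with the C-K-N embedding makes $u\mapsto\int_A u\psi\,dx$ a continuous functional for $\psi\in C_0^\infty(A)$. Neither point affects the correctness of your argument.
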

\begin{proof}
For any $r \in (0, \infty)$, $\omega \in \mathbb{S}^{N-1}$ (the standard sphere in dimension $N$) and $u \in C_0^\infty (\mathbb{R}^N)$, we write
\begin{equation}
u(r, \omega) = - \int_r^\infty \frac{\partial u}{ \partial t} (t, \omega) dt = - \int_r^\infty t^{\frac{1-N}{p}} v(t)^{\frac{-1}{p}} t^{\frac{N-1}{p}}v(t)^{\frac{1}{p}} \frac{\partial u}{ \partial t} (t, \omega) dt .
\end{equation}
Appling H\"older inequality, we get
\begin{equation*}
|u(r, \omega)| \leq \left(\int_r^\infty t^{\frac{(1-N)}{p-1}} |v(t)|^{\frac{-1}{p-1}} dt \right)^{1/p'} \left(\int_r^\infty t^{N-1} |v(t)| |\frac{\partial u}{ \partial t} (t, \omega)|^p dt \right)^{1/p}    \\
\end{equation*}
Hence
\begin{equation}
|u(r, \omega)|^p \leq G(r) \int_r^\infty t^{N-1} |v(t)| |\frac{\partial u}{ \partial t} (t, \omega)|^p dt .
\end{equation}
Integrating the above over $\mathbb{S}^{N-1}$, we get
\begin{align*}
\int_{S_{N-1}} |u(r, \omega)|^p d \omega &\leq  G(r)  \int_{S_{N-1}} \int_r^\infty t^{N-1} |v(t)| |\frac{\partial u}{ \partial t} (t, \omega)|^p dt d\omega \\
& \leq  G(r) \int_{\mathbb{R}^N} v(|x|) |\nabla u|^p dx .
\end{align*}
Now multiplying by $r^{N-1} w(r)$ and integrating from $0$ to $\infty$, we get
\begin{equation}
\int_0^\infty \int_{S_{N-1}} r^{N-1} w(r) |u(r, \omega)|^p d \omega dr \leq \left( \int_0^\infty r^{N-1} w(r) G(r) dr \right) \left(\int_{\mathbb{R}^N} v(|x|) |\nabla u|^p dx \right) .
\end{equation}
That is,
\begin{equation} \label{embedding}
\int_{\R^N}  w(|x|) |u|^p dx \leq C \int_{\mathbb{R}^N} v(|x|) |\nabla u|^p dx ,
\end{equation}
where $C = \int_0^\infty r^{N-1} w(r) G(r) dr $. By the assumptions on the functions $L$ and $K$, (\ref{Equation 5}) follows. 

Now we prove that the above embedding is compact when $v, w \in L^1(0, \infty)$. Let $u_n$ be a sequence in $\D_L^{1,p}(\mathbb{R}^N)$ such that $u_n \w u$ in  $\D_L^{1,p}(\R^N)$. We show that, up to a subsequence, $u_n \rightarrow u$ in $L_K^p(\R ^N)$. Since $r^{N-1} w(r) G(r) \in L^1(0, \infty)$ and $C_0^\infty (0, \infty)$ is dense in $L^1(0, \infty)$, there is a sequence $G_\epsilon \in C_0^\infty (0, \infty)$ such that $G_\epsilon \rightarrow r^{N-1} w(r) G(r)$ in $L^1(0, \infty)$. That is,
\begin{equation*}
\int_0^\infty |r^{N-1} w(r) G(r) - G_\epsilon (r)| dr < \epsilon.    
\end{equation*}
Let $H_\epsilon(r) = (r^{N-1} G(r))^{-1} G_\epsilon (r)$. Then,
\begin{equation*}
\int_{\R^N}  w(|x|) |u_n- u|^p dx \leq \int_{\R ^N} |(w - H_\epsilon)(|x|)| |u_n- u|^p dx + \int_{\R ^N} |H_\epsilon(|x|)| |u_n -u|^p dx .
\end{equation*}
Note that since the sequence $(u_n)$ is weakly convergent, it is bounded in $\D_L^{1,p}(\R^N)$. Hence, there exists a constant $C>0$ such that $\| u_n - u \|_L \leq C$ for all $n \in \mathbb{N}$. Using this and (\ref{embedding}), we get
\begin{align*}
\int_{\R ^N} |(w - H_\epsilon)(|x|)| |u_n -u|^p dx &\leq  \| u_n - u \|_L^p \int_0^\infty | (r^{N-1} w(r) - r^{N-1}H_\epsilon(r)) G(r) |  dr  \\
&\leq C^p \int_0^\infty |r^{N-1} w(r) G(r) - G_\epsilon (r)| dr \\
&< C^p \epsilon. \numberthis \label{Equation 3}
\end{align*}
Now note that $\D_L^{1,p}(\R^N) \subset W_{loc} ^{1,p} (\R^N)$ and $G_\epsilon \in C_0^\infty(0, \infty)$. Hence, by using the Rellich-Kondrasov compactness theorem, we get a subsequence, which will again be denoted by $u_n$, such that  $\int_{\R ^N} | H_\epsilon(|x|)| |u_n -u|^p dx \rightarrow 0 $ as $n \rightarrow \infty$. Hence
\begin{equation}\label{Equation 4}
\int_{\R ^N} |H_\epsilon(|x|)| |u_n -u|^p dx < \epsilon .
\end{equation}
Combining (\ref{Equation 3}) and (\ref{Equation 4}), we get that, upto a subsequence, $\int_{\R^N}  w(|x|) |u_n- u|^p dx \rightarrow 0$ as $n \rightarrow \infty$.  
\end{proof} 

Next, we recall the following generalization of the C-K-N inequity (see \cite{caldiroli2000variational}).
\begin{lemma} \label{C-K-N 2}
Let the weight function $L$ satisfies (C1)-(C2) . Then there exists a constant $C_\alpha>0$ such that for all $u \in C_0^\infty(\R^N)$,
\begin{equation} 
\left(\int_{\mathbb{R}^N} |u|^{p_\alpha^\ast} dx \right)^{p/p_\alpha ^\ast} \leq C_\alpha \int_{\mathbb{R}^N} L(x) |\nabla u |^p dx ,
\end{equation}
where $p_\alpha^\ast = \frac{np}{n-p- p\alpha}$.
\end{lemma}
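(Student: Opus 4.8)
The plan is to deduce the claimed weighted inequality from the classical, pure-power-weight version of the Caffarelli--Kohn--Nirenberg inequality and then to absorb the power weight into $L$ using condition (C1). The key observation is that, by hypothesis, the weight $L$ is pointwise bounded below by the admissible power weight $|x|^{-p\alpha}$; consequently the Dirichlet energy $\int_{\R^N} L\,|\nabla u|^p\,dx$ dominates the power-weighted energy $\int_{\R^N} |x|^{-p\alpha}|\nabla u|^p\,dx$, and for the latter the critical Sobolev embedding with exponent $p_\alpha^\ast$ is already available in the literature.

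First I would invoke the classical CKN inequality in its Sobolev form: for $-1<\alpha<0$ there is a constant $C_\alpha>0$ such that
\begin{equation*}
\left(\int_{\R^N} |u|^{p_\alpha^\ast}\,dx\right)^{p/p_\alpha^\ast} \le C_\alpha \int_{\R^N} |x|^{-p\alpha}\,|\nabla u|^p\,dx, \qquad u \in C_0^\infty(\R^N),
\end{equation*}
where $p_\alpha^\ast = \frac{Np}{N-p-p\alpha}$ (with $n=N$ the ambient dimension). This is precisely the degenerate-weight Sobolev inequality established in \cite{caldiroli2000variational}, and it is the critical-exponent companion to the subcritical estimate of Lemma \ref{C_K_N 1}. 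The value of $p_\alpha^\ast$ is forced by scaling: testing against $u(\lambda\,\cdot)$ and matching powers of $\lambda$ gives $-N/p_\alpha^\ast = 1+\alpha-N/p$, i.e.\ $N/p_\alpha^\ast = N/p-1-\alpha$, which is exactly the stated exponent; the restriction $-1<\alpha<0$ guarantees both $p_\alpha^\ast>p$ and that the weight lies in the admissible range for the cited inequality.

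Second, I would apply (C1). Since $L(x)\ge v(|x|)$ and $v(|x|)>|x|^{-p\alpha}$, we have $|x|^{-p\alpha}\le L(x)$ for a.e.\ $x\in\R^N$, so, as $|\nabla u|^p\ge 0$, monotonicity of the integral gives
\begin{equation*}
\int_{\R^N} |x|^{-p\alpha}\,|\nabla u|^p\,dx \le \int_{\R^N} L(x)\,|\nabla u|^p\,dx .
\end{equation*}
Chaining this with the CKN inequality above yields the assertion with the same constant $C_\alpha$. There is no genuine obstacle in the argument; the only points to verify are that the power weight $|x|^{-p\alpha}$ falls in the admissible parameter range of the classical inequality (ensured by $-1<\alpha<0$), and that the one-sided bound is used in the correct direction, which is exactly what the estimate $v(|x|)>|x|^{-p\alpha}$ in (C1) supplies. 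Since the statement concerns only $u\in C_0^\infty(\R^N)$, no separate density step is needed.
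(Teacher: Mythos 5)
Your proof is correct, and it is essentially the argument behind the result: the paper itself gives no proof of Lemma \ref{C-K-N 2}, merely recalling it from \cite{caldiroli2000variational}, and your two-step derivation --- the critical Sobolev form of the Caffarelli--Kohn--Nirenberg inequality with the pure power weight $|x|^{-p\alpha}$ (parameters $a=\alpha$, $b=0$, admissible since $\alpha \le 0 \le \alpha+1$), followed by the pointwise domination $|x|^{-p\alpha} < v(|x|) \le L(x)$ supplied by (C1) --- is exactly the standard route by which the cited inequality for general degenerate weights is obtained. One small caveat: the restriction $-1<\alpha<0$ alone does not guarantee that $p_\alpha^\ast$ is a positive exponent; one also needs $N-p-p\alpha>0$ (equivalently $\alpha<\tfrac{N-p}{p}$, which is the classical CKN admissibility condition $a<\tfrac{N-p}{p}$), a hypothesis the lemma implicitly presupposes through the very formula $p_\alpha^\ast=\frac{Np}{N-p-p\alpha}$. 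So where you assert that $-1<\alpha<0$ ``guarantees the weight lies in the admissible range,'' you should instead state $N-p-p\alpha>0$ explicitly as a standing assumption; with that made precise, your argument is complete.
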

We also recall Picone's identity \cite{MR1618334}.
\begin{lemma} \label{Picone's identity}
Let $u,v \in C_0^1(\R^N)$ such that $u\geq 0$, $v>0$. Then 
$$ |\nabla u|^p - |\nabla v|^{p-2} \nabla v \cdot \nabla (\frac{u^p}{v^{p-1}}) \geq 0 \mbox{ in } \R^N.$$
\end{lemma}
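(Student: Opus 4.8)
The plan is to establish the pointwise inequality by an explicit differentiation that reduces the claim, after one application of Cauchy--Schwarz, to Young's inequality. First I would expand the gradient of the test quotient via the quotient rule; since $v>0$ everywhere this is valid pointwise, and one obtains
\[
\nabla\!\left(\frac{u^p}{v^{p-1}}\right) = p\,\frac{u^{p-1}}{v^{p-1}}\,\nabla u \;-\; (p-1)\,\frac{u^p}{v^{p}}\,\nabla v.
\]
Substituting this into the quantity in the statement and pairing with $|\nabla v|^{p-2}\nabla v$, the expression becomes
\[
|\nabla u|^p \;+\; (p-1)\Big(\tfrac{u}{v}\Big)^{p}|\nabla v|^{p} \;-\; p\Big(\tfrac{u}{v}\Big)^{p-1}|\nabla v|^{p-2}\,\nabla v\cdot\nabla u,
\]
so everything reduces to showing this trinomial is nonnegative.

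Next I would control the single negative (cross) term. Because $u\ge 0$ and $v>0$, the ratio $\xi:=u/v$ is nonnegative, so no sign ambiguity arises from the non-integer powers. Applying Cauchy--Schwarz, $\nabla v\cdot\nabla u\le |\nabla v|\,|\nabla u|$, whence
\[
p\,\xi^{p-1}|\nabla v|^{p-2}\,\nabla v\cdot\nabla u \;\le\; p\,\xi^{p-1}|\nabla v|^{p-1}\,|\nabla u|.
\]
It therefore suffices to prove $|\nabla u|^p + (p-1)\xi^p|\nabla v|^p \ge p\,\xi^{p-1}|\nabla v|^{p-1}|\nabla u|$.

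Finally, setting $A=|\nabla u|$ and $B=\xi|\nabla v|$ (both nonnegative), this reads $A^p+(p-1)B^p\ge p\,AB^{p-1}$, which is precisely Young's inequality $AB^{p-1}\le \tfrac1p A^p+\tfrac1{p'}B^{p}$ multiplied through by $p$, using $p'=\tfrac{p}{p-1}$. This closes the argument. I do not anticipate a deep obstacle: the crux is simply recognizing that the trinomial collapses to Young's inequality once the cross term is tamed by Cauchy--Schwarz. The only points deserving a remark are (i) interpreting the field $|\nabla v|^{p-2}\nabla v$ as $0$ where $\nabla v=0$, which is legitimate for $p>1$ since $b\mapsto|b|^{p-2}b$ extends continuously to the origin, and (ii) the trivial case $\nabla u=0$, where the cross term vanishes and the surviving terms are manifestly nonnegative. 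One may also note that equality throughout forces $\nabla u=\xi\,\nabla v$, the usual rigidity accompanying Picone's identity, though this is not needed for the inequality itself.
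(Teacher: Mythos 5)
Your proof is correct and follows essentially the same route as the paper's source: the paper states this lemma without proof, citing \cite{MR1618334}, where the argument is precisely yours --- expand $\nabla\left(\frac{u^p}{v^{p-1}}\right)$ by the quotient rule to rewrite the left-hand side as the trinomial $|\nabla u|^p + (p-1)\left(\frac{u}{v}\right)^p|\nabla v|^p - p\left(\frac{u}{v}\right)^{p-1}|\nabla v|^{p-2}\nabla v\cdot\nabla u$, then control the cross term by Cauchy--Schwarz and conclude with Young's inequality. Your side remarks on the degenerate cases ($\nabla v=0$ when $1<p<2$, and $\nabla u=0$) and on the equality case $\nabla u=\frac{u}{v}\nabla v$ match the treatment in that reference as well.
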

\section{Proofs of Theorems \ref{existence}-\ref{asymptotic}}\label{Sec3}
In this section, we provide a proof of the existence, regularity and asymptotic properties of the principal eigenfunctions. 
\subsection{Proof of Theorem \ref{existence}}
We define the functionals $G,I : \D_L^{1,p}(\R^N) \rightarrow \R$ by, 
\begin{equation}\label{defn:G}
G(u) = \int_{\R^N} K(x) |u|^p dx , 
\end{equation}
\begin{equation}\label{defn:I}
I(u) = \int_{\R^N} L(x) |\nabla u|^p dx , 
\end{equation}
and the set $M \subset \D_L^{1,p}(\R^N)$ by,
$$M = \{ u \in \D_L^{1,p}(\R^N) : G(u) =1 \}.$$
Consider the minimisation problem
\begin{equation}\label{Minimisation}
\inf \{ I(u) : u \in M \}.
\end{equation}
Let $\Omega$ be an open set of $\mathbb{R}^N$ such that $K$ is positive on $\Omega$ and let $v \neq 0$ be in $C_0^\infty (\mathbb{R}^N)$ with support in $\Omega$. For such a function $v$, $\int_{\mathbb{R}^N} K(x) |v|^p dx >0$. By replacing $v$ with $\frac{v}{(\int_{\R^N} K(x) |v|^p dx)^{1/p}}$, we get $G(v)=1$. Hence, the set $M$ is nonempty and the infinimum defined in (\ref{Minimisation}) is finite. It is easy to see that the functionals $G,\ I$ are $C^1$ and $I$ are bounded below on the set $M$. Since the embedding $\D_L^{1,p}(\R^N) \subset L_K^p(\R^N)$ is compact, it also follows that the functional $G$ is compact. Hence, by the Lagrange multiplier rule, the infimum defined in (\ref{Minimisation}) is an eigenvalue of the problem (\ref{Pblm 1}). 

We will prove that this infimum is attained on $M$. Let $(u_k)$ be a minimizing sequence in $M$. Since $\D_L^{1,p}(\R^N)$ is reflexive and $(u_k)$ is bounded in $\D_L^{1,p}(\R^N)$, it follows that $(u_k)$ has a subsequence, again denoted by $(u_k)$, which converges weakly to some $u$ in $\D_L^{1,p}(\R^N)$. Since the map $G$ is compact, we get $G(u) = \displaystyle \lim_{k \rightarrow \infty } G(u_k) = 1$. Hence, $u \in M$. Denoting by $\lambda_1$ the infimum in (\ref{Minimisation}) and using the weak lower semicontinuity of norm, we get 
\begin{equation*}
\lambda_1 \leq I(u) \leq \liminf_{k \rightarrow \infty} I(u_k) = \lambda_1,
\end{equation*}
which completes our proof. If $u$ is a critical point of $I$ on $M$, then $|u|$ is also a critical point. That is, there exists a nonnegative eigenfunction corresponding to $\lambda_1$. 

We now prove the positivity of the nonnegative eigenfunctions of (\ref{Pblm 1}). We use the following version of the strong maximum principle proved for the weak solutions of (\ref{Pblm 1}) by Kawohl et al. in \cite{MR2305874}. Here, the authors have considered only the bounded domains in $\mathbb{R}^N$. However, their results hold in our case by restricting the solutions of (\ref{Pblm 1}) to bounded subdomains of $\mathbb{R}^N$. 
\begin{lemma}\label{lemma 1}
Let $0 \leq V \in L_{loc}^1(\mathbb{R}^N)$. Let $u \in \mathcal{D}_L^{1,p}(\mathbb{R}^N)$  be a non negative function such that $V |u|^p \in L_{loc}^1(\mathbb{R}^N)$ and satisfies:
\begin{equation}
\int_{\mathbb{R}^N} L(x) |\nabla u|^{p-2} \nabla u \nabla v dx + \lambda \int_{\mathbb{R}^N} V(x) |u|^{p-2} u v dx \geq 0 , \mbox{ for all }  v \in C_0^\infty(\mathbb{R}^N), v \geq 0 .
\end{equation}
Then either $u \equiv 0$ or else the set $\{ x \in \mathbb{R}^N : u(x) = 0 \}$ is of measure zero.
\end{lemma}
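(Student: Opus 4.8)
The plan is to reduce the global statement to the bounded-domain strong maximum principle of Kawohl et al.\ \cite{MR2305874}, and then to propagate the local dichotomy across $\R^N$ by a connectedness argument. The point is that the cited result already delivers the sought alternative on each bounded subdomain, so the only genuinely new work is to make the local conclusions globally consistent.

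First I would fix an arbitrary open ball $B_R = B(0,R)$ and verify that the hypotheses of the bounded-domain result are met on $B_R$. Since $\mathcal{D}_L^{1,p}(\R^N) \subset W_{loc}^{1,p}(\R^N)$ (as already observed in the proof of Proposition \ref{Compactness}) and $\overline{B_R}$ is compact, the restriction $u|_{B_R}$ lies in $W^{1,p}(B_R)$; moreover $V \in L_{loc}^1(\R^N)$ and $V|u|^p \in L_{loc}^1(\R^N)$ by assumption, and the differential inequality holds for every $v \in C_0^\infty(B_R)$ with $v \geq 0$, because such $v$ belong to $C_0^\infty(\R^N)$ and are therefore admissible in the given inequality. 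Hence the strong maximum principle of \cite{MR2305874} applies on $B_R$ and yields the \emph{local dichotomy}: either $u \equiv 0$ a.e.\ in $B_R$, or the set $\{x \in B_R : u(x) = 0\}$ has measure zero.

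Next I would patch these local alternatives together through a clopen set. Define
$$Z = \{\, x_0 \in \R^N : u = 0 \text{ a.e.\ on some ball } B(x_0, \rho) \,\}.$$
The set $Z$ is open by construction. To see that $Z$ is closed, take $x_0 \in \overline{Z}$ and fix $R > 0$; choosing $x_k \in Z$ with $x_k$ close to $x_0$, the function $u$ vanishes a.e.\ on a sub-ball of positive measure contained in $B(x_0, R)$, so the zero set of $u$ in $B(x_0, R)$ has positive measure. The local dichotomy on $B(x_0, R)$ then forces $u \equiv 0$ a.e.\ on $B(x_0, R)$, whence $x_0 \in Z$. Since $\R^N$ is connected and $Z$ is both open and closed, either $Z = \emptyset$ or $Z = \R^N$. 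If $Z = \R^N$, then $u \equiv 0$. If $Z = \emptyset$, then $u$ does not vanish a.e.\ on any ball, so on every ball the first alternative is excluded and the zero set has measure zero; covering $\R^N$ by countably many balls exhibits $\{u = 0\}$ as a countable union of null sets, hence null, which is exactly the claimed conclusion.

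The only delicate point, and the place where the whole argument turns, is the closedness of $Z$: it is precisely here that one uses the \emph{monotone propagation} furnished by the bounded-domain principle, namely that a \emph{positive-measure} vanishing portion inside a ball upgrades to vanishing on the entire ball. This upgrade is what rules out the pathological scenario in which the two local alternatives alternate between overlapping balls, and it is what makes the local dichotomies assemble into the stated global dichotomy.
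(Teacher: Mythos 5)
Your proposal is correct and takes essentially the same route as the paper: the paper offers no detailed proof, merely citing the bounded-domain strong maximum principle of Kawohl et al.\ \cite{MR2305874} and asserting that it extends ``by restricting the solutions to bounded subdomains of $\mathbb{R}^N$.'' Your clopen/connectedness argument (the set $Z$ of points near which $u$ vanishes a.e., closed precisely because a positive-measure zero set in a ball forces vanishing on the whole ball) supplies, correctly, exactly the globalization step the paper leaves implicit.
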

Now, let $u \in \D_L^{1,p}(\R^N)$ be a nonnegative eigenfunction corresponding to $\lambda_1$. Then, for all  $v \in C_0^\infty(\mathbb{R}^N)$, $v \geq 0$,
\begin{align*}
\int_{\mathbb{R}^N} L(x) |\nabla u|^{p-2} \nabla u \nabla v dx + \lambda_1 \int_{\mathbb{R}^N} K^-(x) |u|^{p-2} u v dx = \lambda_1 \int_{\mathbb{R}^N} K^+(x) |u|^{p-2} u v dx \geq 0.
\end{align*}
Hence, by Lemma \ref{lemma 1}, either $u \equiv 0$ or else the set $\{ x \in \mathbb{R}^N : u(x) = 0 \}$ is of measure zero. \qed 

This completes the proof of Theorem \ref{existence}. Next, we show the regularity of the first eigenfunctions. 
\subsection{Proof of Theorem \ref{regularity}}
We prove that the first eigenfunction is in $L^\infty(\mathbb{R}^N)$. Our proof uses Moser-type iterations as given in \cite{Drabek-95}. For each $k \in \mathbb{N}$, set $u_k(x) = \min \{ u(x), k \}$. Then $u_k \in \D_L^{1,p}(\R^N) \cap L^\infty (\R^N)$, for each $\beta>0$, $u_k^{p \beta+1} \in \D_L^{1,p}(\R^N) \cap L^\infty (\R^N)$ and we have
\begin{equation} \label{eqn.regularity.1}
\int_{\mathbb{R}^N} L(x) |\nabla u|^{p-2} \nabla u \nabla u_k^{p \beta+1} dx = \lambda_1 \int_{\mathbb{R}^N} K(x) |u|^{p-2} u u_k^{p \beta+1} dx.
\end{equation}
\begin{align*}
\int_{\mathbb{R}^N} L(x) |\nabla u|^{p-2} \nabla u \nabla u_k^{p \beta+1} dx &=  ( p \beta +1) \int_{\mathbb{R}^N} L(x)  |\nabla u_k |^p |u_k|^{p \beta} dx \\
&= \frac{(p \beta+1)}{(\beta+1)^p} \int_{\mathbb{R}^N} L(x) | \nabla u_k ^{\beta+1}|^p dx \\
& \geq \frac{1}{C_\alpha} \frac{(p \beta+1)}{(\beta+1)^p} \left(\int_{\mathbb{R}^N} |u^{\beta +1 }|^{p_\alpha^\ast } dx \right)^{p/p_\alpha^\ast},
\end{align*}
where the last inequality follows from Lemma \ref{C-K-N 2}. We also have the following estimate for the right-hand side of (\ref{eqn.regularity.1}).
\begin{align*}
\lambda_1 \int_{\mathbb{R}^N} K(x) |u|^{p-2} u u_k^{p \beta+1} dx & \leq \lambda_1 \int_{\mathbb{R}^N} |K(x)| |u|^{p-2} u u^{p \beta+1} dx \\
& \leq \lambda_1 \| K \|_\infty  \| u \|_{p(\beta+1)}^{p(\beta+1)}.
\end{align*}
Hence, it follows that 
\begin{equation}
\| u \|_{(\beta +1) p_\alpha^\ast} ^{p(\beta+1)} \leq \lambda_1 C_\alpha \frac{(\beta+1)^p}{(p \beta +1)} \| K \|_{\infty} \| u \|_{p(\beta+1)}^{p(\beta+1)}.
\end{equation}
Let $C_\beta = [\lambda_1C_\alpha \frac{(\beta+1)^p}{(p \beta +1)} \| K \|_{\infty}] ^{1/p(\beta+1)} $, then
\begin{equation} \label{eqn.regularity.2}
\| u \|_{(\beta +1) p_\alpha^\ast} \leq C_\beta \| u \|_{p(\beta+1)}.
\end{equation}
Note that (\ref{eqn.regularity.2}) is true for any $\beta >0$. Now choose $\beta_1>0$ so that $p(\beta_1+1) = p_\alpha^\ast$, or $\beta_1+1 = \frac{p_\alpha^\ast}{p} =a$. Then
\begin{equation}
\| u \|_{p_\alpha^\ast a} \leq C_{\beta_1} \| u \|_{p_\alpha^\ast}.
\end{equation}
For any $n \in \mathbb{N}$, we choose $\beta_n>0$ so that $\beta_n +1 = a^n$. Therefore,
\begin{equation} \label{eqn.regularity.3}
\| u \|_{p_\alpha ^\ast a^n} \leq C_{\beta_n} \| u \|_{p_\alpha ^\ast a^{n-1}} \leq (\Pi_{i=1}^n C_{\beta_i}) \| u \|_{p_\alpha^\ast}.
\end{equation}
Which implies that $u \in L^s(\mathbb{R}^N)$ with $s = p_\alpha^\ast a^n$, $n \in \mathbb{N}$. Since $a = \frac{p_\alpha ^\ast}{p}>1$, we have that $a^n \rightarrow \infty$ and hence by interpolation we get $u \in L^s(\mathbb{R}^N)$, for all $p_\alpha^\ast \leq s < \infty$. Also, 
\begin{align*}
C_{\beta_n} &= [\lambda_1 C_\alpha \frac{(\beta_n +1)^p}{(p \beta_n +1)} \| K \|_{\infty}] ^{1/p(\beta_n+1)} \\
&< (\lambda_1 C_\alpha)^{1/p(\beta_n +1)} (a^n)^{1/{a^n}} \| K \|_\infty ^{1/p(\beta_n +1)} \\
&< C^{1/a^n} (a^n)^{1/{a^n}},
\end{align*}
where $C$ is a constant independent of $a$ and $n$. Therefore we get
\begin{equation}
\Pi_{i=1} ^\infty C_{\beta_i} \leq C^{\sum_{i=1}^\infty 1/a^i} e ^ {\sum_{i=1} ^ \infty 1/a^i   \log a^i} < \infty.
\end{equation}
Taking limit in (\ref{eqn.regularity.3}), we get $u \in L^\infty(\mathbb{R}^N)$. By the regularity results in \cite{MR0709038}, it also follows that $u \in C_{loc}^{1, \alpha}(\mathbb{R}^N)$, for some $\alpha \in (0,1)$. This completes the proof.
\hfill $\qed$ 

Next, we proceed with proving the boundedness of the first (radial) eigenfunctions.\vspace{-1em}
\subsection{Proof of Theorem \ref{asymptotic}}
Let $u$ be the principal eigenfunction of (\ref{Pblm 1}) such that $u>0$ and $\int_{\R^2} K(x) |u|^2 dx =1$.
Assume that the functions $K$ and $L$ are radial and positive. The simplicity of the first eigenvalue shows that  $u$  is also radial. Consequently, for $r>0$, (\ref{Pblm 1}) can be written as,
\begin{equation} \label{Pblm Radial}
[r L(r) u'(r)]' = - \lambda_1 r K(r) u(r).
\end{equation}
Let $r_0>0$ be fixed. Integrating (\ref{Pblm Radial}) from $r_0$ to $r$, we get
\begin{equation} \label{Equation 2}
rL(r)u'(r) = r_0 L(r_0) u'(r_0) - \lambda_1 \int_{r_0} ^r s K(s) u(s) ds.
\end{equation}
Since $K$ and $u$ are positive, from (\ref{Pblm Radial}) it follows that $r L(r)u'(r)$ is decreasing. Since $u\in \mathcal{D}_L^{1,2}(\mathbb{R}^2)$, $\int_0^\infty r L(r) |u'(r)|^2 dr < \infty$. Hence, $r L(r)u'(r)$ must decrease to zero (if not, there exist $c>0$ and $r_1>0$ such that $rL(r)u'(r) \geq c$ for all $r>r_1$, which contradicts the integrability of $rL(r)|u'(r)|^2$). Letting $r \rightarrow \infty$ in (\ref{Equation 2}), we get
\begin{equation}
r_0 L(r_0) u'(r_0) = \lambda_1 \int_{r_0} ^\infty s K(s) u(s) ds.
\end{equation}
Therefore from (\ref{Equation 2}) we have
\begin{equation} \label{Equation 6}
r L(r) u'(r) = \lambda_1 \int_r^\infty s K(s) u(s) ds,
\end{equation}
which implies that $u'(r)>0$ and $u$ is a radially increasing function (Hence, $u$ cannot decay to zero). 

Now we prove that with the additional assumptions in Theorem \ref{asymptotic}, $u$ is bounded from above. We have
\begin{equation}
[r L(r) u'(r) F(r)]' = [r L(r) u'(r)]' F(r) + u'(r).
\end{equation}
Using (\ref{Pblm Radial}), the above equation can be rewritten as
\begin{equation}
[u(r) - r L(r) u'(r) F(r)]' = \lambda_1 r K(r) u(r) F(r).
\end{equation}
Integrating this from 0 to $r$ and using (\ref{Equation 6}), we get
\begin{equation}
u(r) = u(0) + \lambda_1 F(r) \int_r ^ \infty s K(s) u(s) ds + \lambda_1 \int_0^r s K(s) u(s)F(s) ds .
\end{equation}
Note that $F$ is an increasing function, and hence
\begin{align*}
u(r) &< u(0) + \lambda_1 \int_0^\infty s F(s) K(s) u(s) ds \\
& \leq u(0) + \lambda_1 (\int_0^\infty s [F(s)]^2 K(s) ds)^{1/2} (\int_0^\infty s K(s) |u(s)|^2 ds )^{1/2} \\
&= u(0) + \lambda_1 (\int_0^\infty s [F(s)]^2 K(s) ds)^{1/2} \int_{\R^2} K(x) |u|^2 dx.
\end{align*}
Hence, from our assumption on $K$, it follows that $u$ is bounded.
\section{Antimaximum Principle}\label{Sec4}
 In this section, we prove a local and global antimaximum principle for the perturbed problem (\ref{Pblm AMP}). Note that by Theorem \ref{existence}, the first eigenvalue $\lambda_1$ of (\ref{Pblm 1}) is given by,
\begin{equation}\label{eigenvalue}
\lambda_1 = \inf \left\{ I(u) : u \in \D_L^{1,p}(\R^N), G(u) = 1 \right\},
\end{equation}
where $G$, $I$ are as defined in (\ref{defn:G}) and (\ref{defn:I}) respectively. Let $\Phi_1$ be the principal eigenfunction corresponding to $\lambda_1$ such that $\lambda_1 = \int_{\R^N} L(x) |\nabla \Phi_1|^p dx $. 
It can be shown by using Picone's identity (see Lemma \ref{Picone's identity}) that $\lambda_1$ is unique. We now establish some results that will help us to prove the antimaximum principle.   
\begin{proposition} \label{Proposition AMP 1}
Let $h$ satisfy $h \geq 0$, $h \not \equiv 0$. If $\lambda \in (0, \lambda_1)$ then any solution $u$ of (\ref{Pblm AMP}) is positive on $\R^N$. 
\end{proposition}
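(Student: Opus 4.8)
The plan is to argue in two stages: first establish that any solution $u$ of (\ref{Pblm AMP}) is nonnegative, and then upgrade nonnegativity to strict positivity by means of the strong maximum principle (Lemma~\ref{lemma 1}) together with the $C_{loc}^{1,\alpha}$ regularity already available for solutions of (\ref{Pblm AMP}). The whole scheme is an energy comparison using the variational characterization (\ref{eigenvalue}) of $\lambda_1$, exploiting that the perturbation $h$ enters with a favorable sign.

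For nonnegativity I would test the weak formulation of (\ref{Pblm AMP}) with the admissible function $v = u^- := \max\{-u,0\} \in \D_L^{1,p}(\R^N)$. On the set $\{u<0\}$ one has $\nabla u = -\nabla u^-$ and $u = -u^-$, so the left-hand side reduces to $-\int_{\R^N} L|\nabla u^-|^p\,dx = -I(u^-)$, while the weighted term on the right becomes $-\lambda\int_{\R^N} K (u^-)^p\,dx = -\lambda\,G(u^-)$. Since $h\ge 0$ and $u^-\ge 0$, the term $\int_{\R^N} h\,u^-\,dx$ is nonnegative and may simply be discarded, yielding the inequality $I(u^-) \le \lambda\,G(u^-)$. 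On the other hand, by $p$-homogeneity the characterization (\ref{eigenvalue}) gives $I(w) \ge \lambda_1\,G(w)$ for every $w\in\D_L^{1,p}(\R^N)$ with $G(w)>0$; and since $\lambda_1>0$ while $I\ge 0$, this same inequality holds trivially when $G(w)\le 0$. Applying it with $w=u^-$ gives $I(u^-)\ge\lambda_1\,G(u^-)$. Combining the two inequalities produces $(\lambda_1-\lambda)\,G(u^-)\le 0$, and since $\lambda<\lambda_1$ this forces $G(u^-)\le 0$. Feeding this back into $I(u^-)\le\lambda\,G(u^-)$ and using $\lambda>0$ gives $I(u^-)\le 0$, hence $I(u^-)=0$; because $L>0$ this means $\nabla u^-=0$ a.e., so $u^-\equiv 0$ in $\D_L^{1,p}(\R^N)$, i.e. $u\ge 0$ a.e.

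For positivity, first note that $u\not\equiv 0$: were $u\equiv 0$, the weak formulation would give $\int_{\R^N} h v\,dx = 0$ for all test $v$, contradicting $h\ge 0$, $h\not\equiv 0$. Writing $K = K^+ - K^-$ and using $u,h\ge 0$, $\lambda>0$, the nonnegative function $u$ satisfies, for all $v\in C_0^\infty(\R^N)$ with $v\ge 0$,
\[
\int_{\R^N} L|\nabla u|^{p-2}\nabla u\cdot\nabla v\,dx + \lambda\int_{\R^N} K^- |u|^{p-2}u v\,dx = \lambda\int_{\R^N} K^+ |u|^{p-2}u v\,dx + \int_{\R^N} h v\,dx \ge 0.
\]
Thus $u$ is a nonnegative object of exactly the type covered by Lemma~\ref{lemma 1} with $V=K^-\in L_{loc}^1(\R^N)$, and since $u\not\equiv 0$ the set $\{u=0\}$ has measure zero. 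Finally, as any solution of (\ref{Pblm AMP}) lies in $C_{loc}^{1,\alpha}(\R^N)$, I would invoke Vázquez's strong maximum principle to promote the almost-everywhere positivity to $u>0$ at every point of $\R^N$.

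The main obstacle is the sign-changing nature of $K$: the quantity $G(u^-)=\int_{\R^N} K\,(u^-)^p\,dx$ need not have a definite sign, so the energy comparison must be organized so that the conclusion survives whether $G(u^-)$ is positive, negative, or zero. The two features that make this work are that the variational inequality $I(w)\ge\lambda_1 G(w)$ holds for \emph{all} $w$ (not merely those with $G(w)=1$), which itself relies on $\lambda_1>0$, and that the term coming from $h$ carries the correct sign so that it can be dropped rather than controlled.
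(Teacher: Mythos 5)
Your proposal is correct and follows essentially the same route as the paper: test the weak formulation with the negative part $u^-$, discard the favorably signed $h$-term, compare the resulting inequality $I(u^-)\le\lambda\,G(u^-)$ with the variational characterization (\ref{eigenvalue}) of $\lambda_1$ to force $u^-\equiv 0$, and conclude positivity via the maximum principle (the paper invokes it in one line where you spell out Lemma \ref{lemma 1} with $V=K^-$). If anything, your version is slightly more careful than the paper's: the paper normalizes $w=u^-/G(u^-)^{1/p}$ without remarking that $G(u^-)>0$ must first be extracted from $0<I(u^-)\le\lambda\,G(u^-)$, whereas your homogeneous inequality $I(w)\ge\lambda_1 G(w)$, valid for all $w$, handles the sign-indefiniteness of $G(u^-)$ directly.
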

\begin{proof}
We have for all $v \in \D_L^{1,p}(\R^N)$, 
\begin{equation} \label{Equation 7}
\int_{\R^N} L(x) |\nabla u|^{p-2} \nabla u \nabla v dx = \lambda \int_{\R^N} K(x) |u|^{p-2}  uv dx + \int_{\R^N} h(x) v dx .
\end{equation}
We will prove that $u^- = 0$ a.e. in $\R^N$. If not, take the test function in (\ref{Equation 7}) to be $v = -u^-$, then 
\begin{align*}
\int_{\R^N} L(x) |\nabla u^-|^p dx & = \lambda \int_{\R^N} K(x) |u^-|^p - \int_{\R^N} h(x) u^- \\
& \leq \lambda \int_{\R^N} K(x) |u^-|^p \\
& = \lambda G(u^-) .
\end{align*}
We have $\lambda_1 = \inf \{ \int_{\R^N} L(x) |\nabla u|^P ; u \in \D_L^{1,p}(\R^N), G(u) =1 \} $. Let $w = \frac{u^-}{G(u^-)^{1/p}}$. Then, $G(w) =1$ and 
\begin{align*}
\lambda_1 & < \int_{\R^N} L(x) |\nabla w|^p dx \\
&= \frac{1}{G(u^-)} \int_{\R^N} L(x) |\nabla u^-|^p dx \\
& \leq \frac{1}{G(u^-)} \lambda G(u^-) = \lambda < \lambda_1,
\end{align*}
which is a contradiction. Hence, $u^-=0$ a.e. Now, $u>0$ will follow from the maximum principle. 
\end{proof}

Using Picone's identity, we can prove the following non-existence result when $\lambda \geq \lambda_1$.
\begin{proposition} \label{Proposition AMP 2}
Let $h$ satisfy $h \geq 0$, $h \not \equiv 0$. Then (\ref{Pblm AMP}) has no solution if $\lambda= \lambda_1$, and no nonnegative solution if $\lambda>\lambda_1$. 
\end{proposition}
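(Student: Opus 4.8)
The plan is to exploit Picone's identity (Lemma \ref{Picone's identity}) together with the variational characterization \eqref{eigenvalue} of $\lambda_1$, reducing both assertions to one computation built around the positive principal eigenfunction $\Phi_1$. The core step I would isolate first is a Picone inequality for a \emph{positive} solution. Suppose $u>0$ solves (\ref{Pblm AMP}). Applying the weighted Picone inequality with the nonnegative function $\Phi_1$ and the positive function $u$, after integration I would obtain
\[
0 \le \int_{\R^N} L |\nabla \Phi_1|^p\,dx - \int_{\R^N} L |\nabla u|^{p-2}\nabla u\cdot\nabla\!\left(\frac{\Phi_1^p}{u^{p-1}}\right)dx.
\]
The first integral equals $\lambda_1$ by the normalization of $\Phi_1$; in the second I would insert $\Phi_1^p/u^{p-1}$ as a test function in the weak formulation of (\ref{Pblm AMP}), and using $|u|^{p-2}u\,(\Phi_1^p/u^{p-1})=\Phi_1^p$ together with $G(\Phi_1)=1$, that integral becomes $\lambda+\int_{\R^N} h\,\Phi_1^p u^{1-p}\,dx$. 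Combining the two gives
\[
\int_{\R^N} h\,\frac{\Phi_1^p}{u^{p-1}}\,dx \le \lambda_1 - \lambda .
\]
Since $h\ge 0$, $h\not\equiv 0$ and $\Phi_1,u>0$, the left-hand side is strictly positive, forcing $\lambda<\lambda_1$. Hence no positive solution can exist for $\lambda\ge\lambda_1$.

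To settle the case $\lambda>\lambda_1$ it then suffices to upgrade a nonnegative solution to a positive one: since $h\not\equiv0$, a nonnegative solution $u$ cannot vanish identically, and rewriting the equation as $-\mathrm{div}(L|\nabla u|^{p-2}\nabla u)+\lambda K^- u^{p-1}=\lambda K^+u^{p-1}+h\ge0$ (using $\lambda>0$), the strong maximum principle (Lemma \ref{lemma 1}) together with the $C^{1,\alpha}_{loc}$ regularity gives $u>0$ everywhere. The core step then produces the contradiction $\lambda<\lambda_1$.

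For $\lambda=\lambda_1$ I must additionally rule out sign-changing solutions, i.e.\ show that any solution satisfies $u^-\equiv0$. Testing (\ref{Pblm AMP}) with $-u^-$ yields $\int_{\R^N} L|\nabla u^-|^p = \lambda_1 \int_{\R^N}K|u^-|^p - \int_{\R^N} h\,u^-$, while the variational inequality $\int_{\R^N} L|\nabla u^-|^p \ge \lambda_1 \int_{\R^N} K|u^-|^p$ (valid for every element of $\D_L^{1,p}(\R^N)$ by \eqref{eigenvalue} and scaling, and trivially when $G(u^-)\le0$ as $\lambda_1>0$) forces $\int_{\R^N} h\,u^-\le0$, hence $\int_{\R^N} h\,u^-=0$ and equality throughout. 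If $G(u^-)>0$, then $u^-/G(u^-)^{1/p}$ is a minimizer of \eqref{eigenvalue}, hence a principal eigenfunction and positive a.e., contradicting $u^-=0$ on the positive-measure set $\{h>0\}$; if $G(u^-)\le0$, the equality forces $\int_{\R^N} L|\nabla u^-|^p=0$, so $u^-\equiv0$. In either case $u\ge0$, whence $u>0$ as in the previous paragraph, and the core step with $\lambda=\lambda_1$ gives $\int_{\R^N} h\,\Phi_1^p u^{1-p}\,dx\le0$, contradicting its strict positivity. Thus no solution exists at $\lambda=\lambda_1$.

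The main obstacle I anticipate is making the Picone computation rigorous in this weighted, unbounded setting: Lemma \ref{Picone's identity} is stated for $C_0^1$ functions, whereas $\Phi_1$ and $u$ lie only in $\D_L^{1,p}(\R^N)\cap C^{1,\alpha}_{loc}(\R^N)$, and $\Phi_1^p/u^{p-1}$ must be verified to be an admissible test function. I would address this by a truncation and approximation argument --- replacing $u$ by $u+\varepsilon$ or $\Phi_1$ by $\min(\Phi_1,n)$, multiplying by cutoff functions and passing to the limit using the decay afforded by (C1)--(C2) --- and by upgrading the a.e.\ positivity of Lemma \ref{lemma 1} to pointwise positivity through the continuity of $u$.
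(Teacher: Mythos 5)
Your proposal is correct and follows essentially the same route as the paper: the same Picone computation against the normalized principal eigenfunction $\Phi_1$, yielding the key inequality $\int_{\R^N} h\,\Phi_1^p/u^{p-1}\,dx \le \lambda_1-\lambda$ for a positive solution $u$, combined with the test-function argument of Proposition \ref{Proposition AMP 1} to force $u^-\equiv 0$ when $\lambda=\lambda_1$. The admissibility issue you flag is handled in the paper exactly along the lines you anticipate, by applying Picone to nonnegative $C_0^1$ approximations $u_k\to\Phi_1$ in $\D_L^{1,p}(\R^N)$ (so that $u_k^p/u^{p-1}$ is a legitimate compactly supported test function) and passing to the limit via Fatou's lemma; your explicit treatment of the case $G(u^-)\le 0$ and of the upgrade from nonnegative to positive solutions via Lemma \ref{lemma 1} merely fills in details the paper leaves implicit.
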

\begin{proof}
Assume that there exists a solution $u$ when $\lambda = \lambda_1$. Following the same approach as in the proof of Proposition \ref{Proposition AMP 1}, we obtain $u^- =0$ almost everywhere. Now, suppose that (\ref{Pblm AMP}) admits a positive solution $u$ when $\lambda \geq \lambda_1$. Let $u_k \in C_0^1(\R^N)$ be such that $u_k \geq 0$ and $u_k \rightarrow \Phi_1$ in $D_L^{1,p}(\R^N)$. Applying Picone's identity to the functions $u$ and $u_k$ we get 
\begin{align*}
0 & \leq \int_{\R^N} L(x) |\nabla u_k|^p dx - \int_{\R^N} L(x) |\nabla u|^{p-2} \nabla u \cdot \nabla\left( \frac{u_k^p}{u^{p-1}} \right) dx \\
&= \int_{\R^N} L(x) |\nabla u_k|^p dx -\lambda \int_{\R^N} K(x) |u_k|^p dx - \int_{\R^N} h(x) \frac{u_k^p}{u^{p-1}} dx .
\end{align*}
Now applying Fatou's lemma, we get
$$\int_{\R^N} h(x) \frac{\Phi_1^p}{u^{p-1}} \leq (\lambda_1 - \lambda) \int_{\R^N} K(x) \Phi_1^p dx = \lambda_1 - \lambda,$$ which gives a contradiction since the left hand side is positive and the right hand side is nonpositive. 
\end{proof} 
The following proposition establishes that the first eigenvalue $\lambda_1$ is isolated. The proof follows with minor modifications from Lemma 2.3 in \cite{Dr-Hu}. Therefore, we omit the details. 
\begin{proposition}\label{Proposition Isolated}
Let $K$ and $L$ be as given in Theorem \ref{existence} and let $\lambda_1$ be the first eigenvalue of the problem (\ref{Pblm 1}). Then there exists $\epsilon>0$ such that (\ref{Pblm 1}) has no eigenvalue in $(\lambda_1, \lambda_1 + \epsilon)$.   
\end{proposition}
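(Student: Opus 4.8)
The plan is to argue by contradiction along the classical Anane--Drábek--Huang scheme, adapted to the weighted space $\D_L^{1,p}(\R^N)$. Suppose no such $\epsilon$ exists; then there are eigenvalues $\mu_n \to \lambda_1$ with $\mu_n > \lambda_1$ and associated eigenfunctions $u_n$, which I normalize so that $G(u_n)=1$. The first point is that every $u_n$ must change sign: if some $u_n$ had a fixed sign, then $|u_n|$ would be a nonnegative eigenfunction, hence positive a.e.\ by the strong maximum principle of Lemma \ref{lemma 1}, and then the Picone computation of Proposition \ref{Proposition AMP 2} (testing the equation for $u_n$ against $\Phi_1^p/|u_n|^{p-1}$ and invoking Lemma \ref{Picone's identity}) would force $\mu_n \le \lambda_1$, contradicting $\mu_n > \lambda_1$. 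Thus $u_n^+\not\equiv 0$ and $u_n^-\not\equiv 0$, and I write $\Omega_n^{\pm}=\{\pm u_n>0\}$.

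Next I pass to a limit. The normalized sequence $(u_n)$ is bounded in $\D_L^{1,p}(\R^N)$, so along a subsequence $u_n \w u$ weakly, and by the compact embedding of Proposition \ref{Compactness}, $u_n \to u$ strongly in $L_K^p(\R^N)$; hence $G(u)=\lim_n G(u_n)=1$ and $u\neq 0$. Since $\int_{\R^N}L|\nabla u_n|^p\,dx=\mu_n G(u_n)=\mu_n\to\lambda_1$, weak lower semicontinuity together with the variational characterization \eqref{eigenvalue} gives $I(u)=\lambda_1$, so $u$ is a principal eigenfunction and, by the uniform convexity of the norm, the convergence $u_n\to u$ is in fact strong in $\D_L^{1,p}(\R^N)$. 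By Theorem \ref{existence} and the simplicity of $\lambda_1$ (established via Picone), $u$ has a fixed sign a.e.; say $u>0$ a.e.

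The quantitative heart of the argument is a uniform lower bound on the nodal sets. Testing the equation for $u_n$ against $u_n^-$ gives $\int_{\R^N}L|\nabla u_n^-|^p\,dx=\mu_n\int_{\Omega_n^-}K|u_n^-|^p\,dx$; combining this with the Caffarelli--Kohn--Nirenberg--Sobolev inequality of Lemma \ref{C-K-N 2} and Hölder's inequality on $\Omega_n^-$ (after bounding $K\le K^+$) yields, upon cancelling the nonzero factor $\|u_n^-\|_{L^{p_\alpha^\ast}}^p$, an inequality of the form $1\le C\mu_n\big(\int_{\Omega_n^-}(K^+)^{r}\,dx\big)^{1/r}$ for a suitable exponent $r$. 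Since $\mu_n$ stays bounded, this produces a constant $c_0>0$, independent of $n$, with $\int_{\Omega_n^-}(K^+)^{r}\,dx\ge c_0$. On the other hand, from $u_n\to u$ a.e.\ (along a further subsequence) and $u>0$ a.e., the indicators $\chi_{\Omega_n^-}\to 0$ a.e., so the weighted mass $\int_{\Omega_n^-}(K^+)^{r}\,dx\to 0$, contradicting the lower bound and finishing the proof.

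I expect the main obstacle to be exactly this nodal lower-bound step. On a bounded domain it is the standard estimate $|\Omega_n^\pm|\ge c_0$ for the Lebesgue measure; here that measure must be replaced by a weighted quantity adapted to the degenerate operator, and because the critical exponent in Lemma \ref{C-K-N 2} renders the relevant power of the weight only borderline integrable under (C1)--(C2), one should run the Hölder step at a slightly subcritical exponent (permissible thanks to the compactness of Proposition \ref{Compactness}) so that $(K^+)^{r}$ is genuinely integrable and the concluding dominated-convergence argument applies. This weighted, unbounded-domain bookkeeping is precisely where the ``minor modifications'' of Lemma 2.3 in \cite{Dr-Hu} enter; the remaining compactness and Picone steps are routine in view of the results already proved above.
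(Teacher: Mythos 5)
Your overall scheme is the right one: the paper itself omits the proof and refers to Lemma 2.3 of \cite{Dr-Hu}, which is precisely the Anane--Dr\'abek--Huang contradiction argument you set up, and your first two stages (sign change of eigenfunctions for $\mu_n>\lambda_1$ via Picone, normalization $G(u_n)=1$, weak compactness, $I(u)=\lambda_1$, strong convergence by uniform convexity, and $u>0$ a.e.\ by simplicity) are sound modulo the standard approximation needed to make $\Phi_1^p/u_n^{p-1}$ admissible, handled exactly as in Proposition \ref{Proposition AMP 2}. The genuine gap is the step you yourself flag: the nodal lower bound. The H\"older conjugate exponent is forced, namely $r=\left(p_\alpha^\ast/p\right)'=\frac{p_\alpha^\ast}{p_\alpha^\ast-p}=\frac{N}{p\beta}$, and (C1) gives only $(K^+)^r\leq w^{N/(p\beta)}<|x|^{-N}$, which is exactly the non-integrable borderline at both $0$ and $\infty$. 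Condition (C2) does not rescue this: it couples $w$ with $G$, and since $G$ can be made to decay arbitrarily fast by taking $v$ large, one can satisfy (C1)--(C2) with, say, $w(r)\sim r^{-p\beta}(\log r)^{-\gamma}$ at infinity for small $\gamma>0$, for which $\int^\infty r^{N-1}w(r)^{N/(p\beta)}\,dr=\infty$. Then your dominating function is not integrable, the dominated-convergence conclusion fails, and the inequality $1\leq C\mu_n\left(\int_{\Omega_n^-}(K^+)^r dx\right)^{1/r}$ may be vacuous. Your proposed patch does not close this: in any splitting $\int_{\Omega_n^-}K^+|u_n^-|^p\,dx\leq\|K^+\|_{L^q(\Omega_n^-)}\,\|\,|u_n^-|^p\|_{L^m}$ with $1/q+1/m=1$, taking $q<N/(p\beta)$ forces $pm>p_\alpha^\ast$, a \emph{supercritical} global Lebesgue norm that no available embedding controls; the compactness of Proposition \ref{Compactness} concerns the weighted space $L_K^p(\R^N)$ and supplies no subcritical Lebesgue bound on $\R^N$. (Note that for $\alpha=0$ your $r$ reduces to $N/p$, and \cite{Dr-Hu} simply \emph{assume} $g\in L^{N/p}(\R^N)$; the present paper's hypotheses contain no such condition, which is exactly why this step is not a routine adaptation.)

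The repair available within the paper's own toolkit avoids integrability of powers of $K^+$ altogether and is the natural ``minor modification'': testing against $\mp u_n^{\pm}$ gives $I(u_n^{\pm})=\mu_n\,G(u_n^{\pm})>0$, so both $G(u_n^{\pm})>0$, and the normalizations $w_n^{\pm}=u_n^{\pm}/G(u_n^{\pm})^{1/p}$ satisfy $G(w_n^{\pm})=1$ and $I(w_n^{\pm})=\mu_n\rightarrow\lambda_1$; thus \emph{both} are minimizing sequences for (\ref{eigenvalue}). By the compactness of $G$, weak lower semicontinuity, simplicity of $\lambda_1$, and uniform convexity --- exactly the chain used in the proof of Theorem \ref{Theorem: local AMP} to conclude $w_k\rightarrow\Phi_1$ --- one gets, along a subsequence, $w_n^{\pm}\rightarrow\Phi_1$ strongly in $\D_L^{1,p}(\R^N)$, hence a.e.\ via $\D_L^{1,p}(\R^N)\subset W_{loc}^{1,p}(\R^N)$. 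Passing to the limit a.e.\ in the disjointness relation $\min\{w_n^+,w_n^-\}=0$ yields $\Phi_1=0$ a.e., contradicting $\Phi_1>0$ a.e. Substituting this for your nodal-mass estimate makes the argument correct under (C1)--(C2) as stated.
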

Using Proposition \ref{Proposition Isolated} and the Fredholm alternative for nonlinear operators established by Fu\v{c}\'ik et al. (See \cite[Theorem 3.2]{MR467421}), we can prove the following existence result.
\begin{proposition}
Let $K$ be as stated in Theorem \ref{existence} and $\lambda_1$ be the first eigenvalue of the problem (\ref{Pblm 1}). Then there exists $\epsilon>0$ such that (\ref{Pblm AMP}) has a solution in $\D_L^{1,p}(\R^N)$ for every $\lambda \in (0, \lambda_1+\epsilon) \setminus \{ \lambda_1 \} $ and $h$ in the dual of $\D_L^{1,p}(\R^N)$.
\end{proposition}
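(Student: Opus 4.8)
The plan is to recast the perturbed equation (\ref{Pblm AMP}) as an operator equation in the dual space $[\D_L^{1,p}(\R^N)]^*$ and then invoke the nonlinear Fredholm alternative of \cite[Theorem 3.2]{MR467421}. Define the operators $A, B : \D_L^{1,p}(\R^N) \to [\D_L^{1,p}(\R^N)]^*$ by
\begin{equation*}
\langle A(u), v \rangle = \int_{\R^N} L(x) |\nabla u|^{p-2} \nabla u \cdot \nabla v \, dx, \qquad \langle B(u), v \rangle = \int_{\R^N} K(x) |u|^{p-2} u \, v \, dx,
\end{equation*}
so that $u \in \D_L^{1,p}(\R^N)$ is a weak solution of (\ref{Pblm AMP}) at the parameter $\lambda$ precisely when $A(u) - \lambda B(u) = h$ in $[\D_L^{1,p}(\R^N)]^*$. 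Both $A$ and $B$ are odd and positively homogeneous of degree $p-1$, i.e. $A(tu) = t^{p-1} A(u)$ and $B(tu) = t^{p-1} B(u)$ for $t \geq 0$, which is exactly the homogeneity structure demanded by the Fredholm theory for $(p-1)$-homogeneous maps.

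Next I would verify the structural hypotheses required by the cited theorem. The operator $A$ is the derivative of the strictly convex, coercive functional $u \mapsto \frac1p I(u)$; consequently it is continuous, strictly monotone, coercive, and satisfies the $(S_+)$ condition, so by the Browder--Minty theorem it is a homeomorphism of $\D_L^{1,p}(\R^N)$ onto its dual. The decisive point is the compactness of $B$: since $B$ factors through the embedding $\iota : \D_L^{1,p}(\R^N) \hookrightarrow L_K^p(\R^N)$, which is compact by Proposition \ref{Compactness}, the operator $B$ carries bounded sequences to relatively compact sets in $[\D_L^{1,p}(\R^N)]^*$, i.e. it is completely continuous. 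With $A$ of type $(S_+)$ and $B$ compact and homogeneous, \cite[Theorem 3.2]{MR467421} applies and asserts that whenever $\lambda$ is \emph{not} an eigenvalue of $A(u) = \lambda B(u)$, the equation $A(u) - \lambda B(u) = h$ admits a solution for \emph{every} $h \in [\D_L^{1,p}(\R^N)]^*$.

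It then remains to confirm that no $\lambda$ in $(0, \lambda_1 + \epsilon) \setminus \{\lambda_1\}$ is an eigenvalue. For $\lambda \in (0, \lambda_1)$ this follows from the variational characterization (\ref{eigenvalue}): an eigenfunction $u \neq 0$ satisfies $I(u) = \lambda G(u)$ with $I(u) > 0$, which forces $G(u) > 0$, and then $w = u / G(u)^{1/p}$ obeys $G(w) = 1$ and $I(w) = \lambda$, whence $\lambda = I(w) \geq \lambda_1$; thus there is no eigenvalue below $\lambda_1$. For $\lambda \in (\lambda_1, \lambda_1 + \epsilon)$, Proposition \ref{Proposition Isolated} guarantees (after shrinking $\epsilon$ if necessary) that the interval contains no eigenvalue. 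Combining the two ranges, every $\lambda \in (0, \lambda_1 + \epsilon) \setminus \{\lambda_1\}$ is a non-eigenvalue, and the Fredholm alternative yields the desired solution in $\D_L^{1,p}(\R^N)$.

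I expect the main obstacle to lie in the careful verification that the abstract hypotheses of \cite[Theorem 3.2]{MR467421} genuinely hold in the present weighted, unbounded-domain setting, chiefly that $A$ satisfies the $(S_+)$ condition and that $B$ is compact on all of $\R^N$, the latter resting entirely on the compact embedding of Proposition \ref{Compactness} rather than on any bounded-domain Rellich theorem. A secondary technical point is the a priori boundedness of solutions underpinning the degree argument internal to the cited theorem: given a hypothetical sequence of solutions $u_n$ with $\|u_n\|_L \to \infty$, one normalizes $w_n = u_n / \|u_n\|_L$ and uses the $(p-1)$-homogeneity to obtain $A(w_n) - \lambda B(w_n) = \|u_n\|_L^{-(p-1)} h \to 0$; compactness of $B$ then extracts a unit-norm limit $w$ solving $A(w) = \lambda B(w)$, contradicting that $\lambda$ is not an eigenvalue and thereby closing the argument.
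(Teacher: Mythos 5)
Your proposal is correct and follows exactly the route the paper takes: the paper proves this proposition in one line by combining Proposition \ref{Proposition Isolated} with the nonlinear Fredholm alternative of Fu\v{c}\'ik et al.\ \cite[Theorem 3.2]{MR467421}, which is precisely your operator-equation framework $A(u)-\lambda B(u)=h$. You in fact supply more than the paper does --- the verification that $B$ is compact via Proposition \ref{Compactness}, the normalization argument showing no eigenvalue lies in $(0,\lambda_1)$, and the blow-up/normalization step underlying the a priori bound --- all of which are the correct details behind the paper's citation.
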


Now we will prove the local and global antimaximum principles. \\
\textbf{Proof of Theorem \ref{Theorem: local AMP}:} Assume, for contradiction, that the conclusion in Theorem \ref{Theorem: local AMP} is not true. Then for each $k \in \mathbb{N}$, there exist $\mu_k, \ u_k$ such that $\mu_k > \lambda_1$, $\mu_k \downarrow \lambda_1$, $u_k$ is a solution to
$$ - \mbox{div} (L(x) |\nabla u_k|^{p-2} \nabla u_k) = \mu_k K(x) |u_k|^{p-2} u_k + h(x) \mbox{ in } \R^N,$$ and $|E \cap  \{ u_k \geq 0 \}| >0$ for all $k$. We will first prove that $\lim_{k \rightarrow \infty} \| u_k\|_L = \infty$. If this is not true, then $(u_k)$ has a subsequence, again denoted by $(u_k)$, such that $u_k \rightharpoonup u $ in $\D_L^{1,p}(\R^N)$. By Serrin $L^\infty$ estimate and Tolksdorff regularity result (see \cite{Serrin}, \cite{Tolksdorf}), $u_k$ is bounded in $C^{1,\alpha}(E)$. Hence, by Arzela Ascoli, $u_k \rightarrow u$ in $C^1_{loc}(\R^N)$. Letting $k \rightarrow \infty$ and using the density of $C_0^\infty(\R^N)$ in $D_L^{1,p}(\R^N)$, we get 
$$- \mbox{div } (L(x) |\nabla u|^{p-2} \nabla u) = \lambda_1 K(x) |u|^{p-2} u + h(x) \mbox{ in } \R^N, $$ which is a contradiction to Proposition \ref{Proposition AMP 2}. Now, set $v_k = \frac{u_k}{\| u_k \|_L}$. Then for all $\phi \in \D_L^{1,p}(\R^N)$,
\begin{align*}
\int_{\R^N} L(x) |\nabla v_k|^{p-2} \nabla v_k \nabla \phi &= \frac{1}{\| u_k \|_L^{p-1}} \int_{\R^N} L(x) |\nabla u_k|^{p-2} \nabla u_k \nabla \phi \\
&= \frac{1}{\|u_k\|_L^{p-1}} \left[ \mu_k \int_{\R^N} K(x) |u_k|^{p-2} u_k \phi + \int_{\R^N} h(x) \phi \right] \\
& = \mu_k \int_{\R^N} K(x) |v_k|^{p-2} v_k \phi + \frac{1}{ \| u_k \|_L^{p-1}} \int_{\R^N} h(x) \phi \numberthis \label{equation: local AMP}.
\end{align*}
Since $(v_k)$ is bounded there exists a subsequence, again denoted by $v_k$, such that $v_k \rightharpoonup v$, $v_k \rightarrow v$ in $C_{loc}^1(\R^N)$. Letting $k \rightarrow \infty$ in (\ref{equation: local AMP}), from the compactness of $G$, we get 
\begin{equation*}
\int_{\R^N}  L(x) |\nabla v|^{p-2} \nabla v \nabla \phi = \lambda_1 \int_{\R^N}  K(x) |v|^{p-2} v \phi, \mbox{ for all } \phi \in \D_L^{1,p}(\R^N) .  
\end{equation*}
Since the first eigenvalue $\lambda_1$ is simple, it follows that $v = c \Phi_1$ for some constant $c$. We also have 
\begin{equation*}
1= \int_{\R^N}  L(x) |\nabla v|^{p} dx = \lambda_1 \int_{\R^N}  K(x) |v|^{p} dx.
\end{equation*}
Hence, $c = \pm (\frac{1}{\lambda_1})^{1/p}$. Now suppose $c = -\lambda_1^{1/p}$. Then $v_k \rightarrow c \Phi_1$ uniformly on $\overbar{E}$ will imply that $u_k = v_k \| u_k \|_L$ is negative on $\overbar{E}$ for sufficiently large values of $k$. This contradicts the fact that $|E \cap  \{ u_k \geq 0 \}| >0$, for all $k$. Therefore $c=\lambda_1^{1/p}$. We claim that $v_k \geq 0$ on $\R^N$ for sufficiently large values of $k$. Suppose, for contradiction, that $v_k^- \not \equiv 0$. Using $- v_k^-$ as a test function, we obtain
\begin{align*}
0 < \int_{\R^N} L(x) |\nabla v_k^-|^p dx & = \mu_k \int_{\R^N} K(x) (v_k^-)^p dx - \frac{1}{\| u_k \|_L^{p-1}} \int_{\R^N} h(x) v_k^- dx \\
& \leq \mu_k \int_{\R^N} K(x) (v_k^-)^p dx .
\end{align*}
Let $w_k = \frac{v_k^-}{G(v_k^-)^{1/p}}$. Then $\int_{\R^N} K(x) w_k^p dx =1$ and $$\int_{\R^N} L(x) |\nabla w_k|^p dx = \frac{1}{G(v_k^-)} \int_{|R^N} L(x) |\nabla v_k^-|^p dx \leq \mu_k .$$ 
This implies $\lambda_1 \leq \int_{\R^N} L(x) | \nabla w_k|^p dx \leq \mu_k$. Using the reflexivity of $\D_L^{1,p}(\R^N)$ and the weak lower semicontinuity of the norm we conclude that, up to a subsequence, $w_k \rightharpoonup w \in \D_L^{1,p}(\R^N)$ and 
\begin{equation*}
\int_{\R^N} L(x) |\nabla w|^p dx \leq \liminf_{k \rightarrow \infty} \int_{\R^N} L(x) |\nabla w_k|^p dx = \lambda_1.
\end{equation*}
Since $\lambda_1$ is simple, we must have $(w_k) \rightarrow \Phi_1$ in $\D_L^{1,p}(\R^N)$. Since $v_k \rightarrow \lambda_1^{1/p} \Phi_1$ in $C_{loc}^1(\R^N)$, we must have $v_k>0$ for large $k$ on $\overbar{E}$. Hence, $w_k=0$ for large $k$ on $\overbar{E}$, leading to a contradiction. 
\hfill $\qed$ 

In \cite{Fleckinger}, the authors established the global antimaximum principle for (\ref{Pblm AMP}) when $1<p<N$, when $K$ has a dominant negative part and $h$ has compact support. Here, we extend the result to include the case $p=N$ without imposing any sign restriction on the weight function $K$. \\
\textbf{Proof of Theorem \ref{Theorem: Global AMP}:} Assume that the global antimaximum principle does not hold. Then for each $k \in \mathbb{N}$ there exist$\mu_k, \ u_k$, as described in the proof of Theorem \ref{Theorem: local AMP}, such that $|\{ u_k \geq 0 \}|>0$ for all $k$. We claim that for large values of $k$, $|\mbox{supp}(u_k^+)|>0$. 

Since $h$ has compact support, there exists $r>0$ such that $\mbox{supp}(h) \subset B(0,r)$. Now, applying the local antimaximum principle, there exists $k_0 \in \mathbb{N}$ sufficiently large so that for all $k \geq k_0$, $u_k<0$ in $\overbar{B(0,r)}$. Suppose for some $\overbar{k} \geq k_0$, $|\mbox{supp}(u_{\overbar{k}}^+)| =0$. Then $v= - u_{\overbar{k}}$  satisfies 
\begin{equation}\label{Equation 8}
- \mbox{div} (L(x) |\nabla v|^{p-2} \nabla v) = \lambda K(x) |v|^{p-2} v - h(x) \mbox{ in } \R^N .    
\end{equation} 
Applying Lemma \ref{lemma 1} and using the fact that $h \equiv 0$ in $B(0,r)^c$, we conclude that either $v \equiv 0$ or $v>0$ in $B(0,r)^c$. Since $v$ is continuous and $v>0$ in $\partial B(0,r)$, it follows that $v=-u_{\overbar{k}}>0$ in $B(0,r)^c$. This, together with the local antimaximum principle, implies that $u_{\overbar{k}}<0$ in $\R^N$, leading to a contradiction. Therefore, $|\mbox{supp}(u_k^+)| >0$ for all $k \geq k_0$. Now, let $w_k = \frac{u_k^+}{G(u_k^+)^{1/p}}$. Then $G(w_k) =1$ and $\int_{\R^N} L(x) |\nabla w_k|^p dx = \mu_k $. Following the same approach as in Theorem \ref{Theorem: local AMP}, we obtain $w_k \rightarrow \Phi_1$ in $\D_L^{1,p}(\R^N)$. But for all $k \geq k_0$, $u_k<0$ in $\overbar{B(0,r)}$ and hence $w_k \equiv 0$, which gives a contradiction.
\hfill $\qed$

\section*{Acknowledgements}
AS expresses his sincere gratitude to DST-INSPIRE with Grant DST/INSPIRE/04/  2018/002208, funded by the Government of India. The research for AJ was partially conducted during the author's visit to IIT Jodhpur, funded by DST/INSPIRE/04/2018/  002208.

\bibliography{References}
\bibliographystyle{amsplain}
	
\end{document}